\newtheorem{theo}{Theorem}[section]
\newtheorem{cor}[theo]{Corollary}
\newtheorem{lem}[theo]{Lemma}
\newtheorem{prop}[theo]{Proposition}
\newtheorem{defn}[theo]{Definition}
\newtheorem{rmk}[theo]{Remark}
\newtheorem{ex}[theo]{Example}
\newcommand{\Z}{\mathbb{Z}}
\newcommand{\N}{\mathbb{N}}
\newcommand{\s}{\sigma}
\newcommand{\LA}{L_\mathbf{\Lambda}}
\newcommand{\F}{\mathcal{F}}
\newcommand{\Pp}{\mathcal{P}}
\title{Sliding block codes between shift spaces over infinite alphabets}
\author{
\small{Daniel Gon\c{c}alves}\\
\footnotesize{UFSC -- Department of Mathematics}\\
\footnotesize{88040-900 Florian\'{o}polis - SC, Brazil}\\
\footnotesize{\texttt{daemig@gmail.com}}
\and
\small{Marcelo Sobottka}\\
\footnotesize{UFSC -- Department of Mathematics}\\
\footnotesize{88040-900 Florian\'{o}polis - SC, Brazil}\\
\footnotesize{\texttt{sobottka@mtm.ufsc.br}}
\and
\small{Charles Starling}\\
\footnotesize{University of Ottawa}\\
\footnotesize{Dept. of Mathematics and Statistics}\\
\footnotesize{585 King Edward, Ottawa, ON}\\
\footnotesize{K1N 6N5}\\
\footnotesize{\texttt{cstar050@uottawa.ca}}
}
\date{}
\begin{document}

\maketitle

\begin{abstract} Recently Ott, Tomforde and Willis introduced a notion of one-sided shifts over infinite alphabets and proposed a definition for sliding block codes between such shift spaces.

In this work we propose a more general definition for sliding block codes between Ott-Tomforde-Willis shift spaces and then we prove Curtis-Hedlund-Lyndon type theorems for them, finding sufficient and necessary conditions under which the class of the sliding block codes coincides with the class of continuous shift-commuting maps.
\end{abstract}

\bigskip
\hrule
\noindent
{\footnotesize\em This is a pre-copy-editing, author-produced PDF of an article accepted for publication in Mathematische Nachrichten, following peer review. The definitive publisher-authenticated version {\em D. Gon\c{c}alves,  M. Sobottka and C. Starling. Sliding Block Codes between Shift Spaces over Infinite Alphabets. (2016), 289, 17, 2178-2191; Math. Nachr.. doi:10.1002/mana.201500309}, is available online at:\break http://onlinelibrary.wiley.com/doi/10.1002/mana.201500309/abstract .}
\hrule
\bigskip


\section{Introduction}

Symbolic dynamics is a fundamental area of dynamical systems, and is typically concerned with a finite alphabet $A$, spaces of infinite sequences $A^\N$ or $A^\Z$ in $A$, and closed subspaces of $A^\N$ or $A^\Z$ which are invariant under the shift map. Though most of the theory is developed for shift spaces over finite alphabets, over the last decades researchers have proposed generalizations for the infinite alphabet case, see for example  \cite{FiebigFiebig1995,kitchens1997,LindMarcus}. One of the hurdles to overcome when defining shift spaces over infinite alphabets is the fact that the countable product of an infinite discrete space is not locally compact.

One approach to dealing with this difficulty was taken in \cite{FiebigFiebig1995}, where the authors use the Alexandroff one-point compactification for locally compact shift spaces over countable alphabets. Such compactification was used to prove several results for the entropy of countable Markov shifts \cite{Fiebig2001,Fiebig2003,FiebigFiebig1995,FiebigFiebig2005}.

In a recent paper by Ott, Tomforde, and Willis, see \cite{Ott_et_Al2014}, the authors constructed a compactification $\Sigma_A$ of $A^\N$ which can be identified with the set of all finite sequences in $A$ (including an empty sequence) together with the infinite sequences in $A$. Subshifts in this case are then taken to be closed subsets $\Lambda\subset \Sigma_A$ invariant under the shift which satisfy the so called ``infinite extension property'', which guarantees that $\Lambda$ contains infinite sequences and is determined by its infinite sequences. This is an approach taken with an eye towards applications in the C*-algebras of infinite graphs and \cite{Ott_et_Al2014} represents a comprehensive study of such shifts, morphisms between them, and related groupoids.

While the one-point compactification can only be used for locally compact shift spaces (which constitute the class of shift spaces that are simultaneously row-finite and column-finite -- see the end of Section \ref{Background}), the Ott-Tomforde-Willis compactification can be used for any shift space and coincides with the Alexandroff compactification if the shift is simultaneously row-finite and column-finite. However, a cost of using the Ott-Tomforde-Willis compactification is that it generally introduces infinitely many new points in the space.

In the classical situation, a key fact is that any continuous map $\Phi:\Lambda\to \Gamma$ between shift spaces which commutes with the shift map must be a {\em sliding block code}, that is, there is a positive integer $n$ such that the $i$th entry of the sequence $\varphi(x)$ depends only on the entries of $x$ in a window of size $n$ around $x_i$ -- this is the Curtis-Hedlund-Lyndon theorem.

In this work we propose a definition of sliding block codes between Ott-Tomforde-Willis shift spaces over countable alphabets, and characterize them. We note that our definition is more general than the one given in \cite{Ott_et_Al2014} -- see Definition \ref{defn_sliding block code}). We show that when $A$ is finite our definition matches the classical definition, and  also present sufficient and necessary conditions under which the class of continuous shift-commuting maps coincide with the class of sliding block codes, obtaining an analogue to the Curtis-Hedlund-Lyndon theorem.

We remark that given a two-sided shift space over infinitely countable alphabets, one can construct a compactification of it in an  analogous way to that used in \cite{Ott_et_Al2014} for one-sided shifts. However, the resulting topology is not metrizable and the problem of obtaining general Curtis-Hedlund-Lyndon type results for them remains open (see \cite{GSS1}).

What follows is broken into two sections. In Section \ref{Background} we recall background and definitions from \cite{Ott_et_Al2014}. In Section \ref{sliding block code} we define sliding block codes and undertake a comprehensive study of them, culminating in the previously mentioned versions of the Curtis-Hedlund-Lyndon theorem for such sliding block codes (Subsection \ref{CHL-Theo}). To close, we discuss the notions of higher order shift presentations.

\section{Background}\label{Background}

We will denote the set of all positive integers by $\N$.

Recall that a locally compact Hausdorff space is called  {\em zero dimensional} or {\em totally disconnected} if it has a basis consisting of clopen sets. A {\em dynamical system} is a pair $(X,T)$, where $X$ is a locally compact space and $T:X\to X$ is a map. If $T$ is continuous we say that $(X,T)$ is a {\em topological dynamical system}. Two dynamical systems $(X,T)$ and $(Y,S)$ are {\em conjugate} if there exists a bijective map $\Phi:X\to Y$ such that $\Phi\circ T=S\circ \Phi$. If, in addition, $(X,T)$ and $(Y,S)$ are topological dynamical systems and $\Phi$ is a homeomorphism then $(X,T)$ and $(Y,S)$ are {\em topologically conjugate}. In the above case, we call the $\Phi$ a {\em (topological) conjugacy} for the dynamical systems. When $X$ and $Y$ are shift spaces and $T$ and $S$ are their respective shift maps we usually just say that $X$ and $Y$ are (topologically) conjugate to refer that the dynamical system are (topologically) conjugate.






\subsection{Ott-Tomforde-Willis one-sided shift spaces}

Fix an infinite countable discrete set $A$ and define a new symbol $\o$, which we call the {\em empty letter}, and define the extended alphabet $\tilde{A}:=A\cup\{\o\}$. We will consider the set
of all infinite sequences over $\tilde{A}$, $\tilde{A}^\N:=\{(x_i)_{i\in\N}: x_i\in \tilde{A}\}$
and define $\Sigma_A^{\text{inf}},\Sigma_A^{\text{fin}}$ by
$$\Sigma_A^{\text{inf}}:=A^\N:=\{(x_i)_{i\in\N}:\ x_i\in A\ \forall i\in \N\}$$ and $$\Sigma_A^{\text{fin}}:=\{(x_i)_{i\in\N}:\ x_i \in \tilde{A} \text{ and if } x_i=\o\ \text{then} \ x_{i+1}=\o, \forall i\in\N\}.$$

\begin{defn}The {\em Ott-Tomforde-Willis one-sided full shift} over $A$ is the set $$\Sigma_A:=\left\{\begin{array}{lcl}\Sigma_A^{\text{inf}} & \text{, if} & |A|<\infty\\\\ \Sigma_A^{\text{inf}}\cup\Sigma_A^{\text{fin}} & \text{, if} &
|A|=\infty\end{array}\right..$$

We define the length of $x\in\Sigma_A$ as $l(x):=\min_{k\geq 1}\{k-1: x_k=\o\}$.
\end{defn}

Note that $l(x)<\infty$ if and only if $x\in \Sigma_A^{\text{fin}}$ (in particular, $\O$ is the unique sequence with length zero). We will refer the  constant sequence $\O:=(\ldots\o\o\o \ldots)\in \Sigma_A^{\text{fin}}$ as the {\em empty sequence}, and, in general, the sequences of the set $\Sigma_A^{\text{fin}}$ will be referred as {\em finite sequences}. To simplify the notation we will identify $\Sigma_A^{\text{fin}}$ with the set $\{\O\}\cup\bigcup_{k\in\N}A^k$ by identifying $(x_1x_2x_3\ldots x_k\o\o\o\o\ldots)$ with $(x_1x_2x_3\ldots x_k)$. Following this identification, we will use the notation $x=(x_i)_{i\leq k}$ to refer to a point of $\Sigma_A^{\text{fin}}$ with length $k\geq 1$.

Given two elements $x = (x_1, \dots, x_m)$ and $ y = (y_1, \dots y_n)\in \Sigma_A^{\text{fin}}$, we may form their {\em concatenation} $xy = (x_1, \dots, x_m, y_1, \dots y_n)$. We may similarly concatenate an element $x\in \bigcup_{k\in\N}A^k$ with an element $y\in A^\N$ to form $xy\in A^\N$. We will also use the convention that $x\O=x$ for all $x\in\Sigma_A^{\text{fin}}$.

Given $x\in\Sigma_A^{\text{fin}}$ and a finite set $F\subset A$, a {\em generalized cylinder set} of $\Sigma_A$ is defined as:
\begin{equation}\label{cylinder}Z(x,F):=\left\{\begin{array}{lcl}\{y\in\Sigma_A:\ y_i=x_i\ \forall i=1,\ldots, l(x),\ y_{l(x)+1}\notin F\} &\text{, if}& x\neq\O\\
                          \{y\in\Sigma_A:\ y_1\notin F\} &\text{, if}& x=\O.\end{array}\right.\end{equation}

If $F=\emptyset$ we will say that $Z(x,F)$ has length $k$, otherwise we will say that $Z(x,F)$ has length $k+1$. Furthermore, if $F=\emptyset$ then the set $Z(x,F)$ is simply denoted by $Z(x)$. Notice that the sets of the form $Z(x)$ coincide with the usual cylinders which form the basis of the product topology in $\Sigma_A^{\text{inf}}$, while $Z(\O,F)$ is a neighborhood of the empty sequence $\O$. Furthermore, the generalized cylinders are clopen and, for each finite set $F$, $Z(\O,F)$ corresponds to the complement of a finite union of cylinders of the form $Z(x)$.

The full shift space over $A$ from \cite{Ott_et_Al2014} is the set $\Sigma_A$ given the topology generated by the generalized cylinder sets. With this topology $\Sigma_A$ is zero dimensional, compact and metrizable (see
\cite{Ott_et_Al2014}, Section 2).

The {\em shift map} $\s:\Sigma_A\to\Sigma_A$ is defined as the map given by $\s((x_i)_{i\in\N})=(x_{i+1})_{i\in\N}$ for all $(x_i)_{i\in\N}\in\Sigma_A$. Note that
$\s(\O)=\O$ and for all $x=(x_1\ldots x_k)\in\Sigma_A^{\text{fin}}$, $x\neq \O$, we have that $\s(x_1x_2x_3\ldots x_k)=(x_2x_3x_4\ldots x_k)$.

We remark that, if $A$ is infinite, then $\s$ is not continuous at the empty sequence $\O$ (\cite{Ott_et_Al2014}, Proposition 2.23).

Given a subset $\Lambda\subseteq\Sigma_A$, let \begin{equation}\label{shift_inf_fin}\begin{array}{lcl}\Lambda^{\text{fin}}&:=&\Lambda\cap\Sigma_A^{\text{fin}}\\\\ \text{and}\\\\
\Lambda^{\text{inf}}&:=&\Lambda\cap\Sigma_A^{\text{inf}},\end{array}\end{equation}
the set of all finite sequences of $\Lambda$ and the set of all infinite sequences of $\Lambda$, respectively. Define  $B_\infty(\Lambda)=\Lambda^{\text{inf}}$ and, for each $n\geq 1$, let
\begin{equation}\label{subblocks}B_n(\Lambda):=\{(x_1,\ldots,x_n)\in {\tilde{A}}^n: x_1,\ldots,x_n\text{ is a subblock of some sequence in }\Lambda\}.\end{equation}
These are the {\em blocks of length $n$} in $\Lambda$. We single out the blocks of length one and use the notation $\LA:= B_1(\Lambda)\setminus\{\O\}$ -- this is the set of all symbols used by sequences of $\Lambda$, or the {\em letters} of $\Lambda$. The {\em language} of $\Lambda$ is \begin{equation}\label{language}B(\Lambda):=\bigcup_{n\geq 1}B_n(\Lambda).\end{equation}

\begin{rmk} Notice that $(x_1\ldots x_k)\in\Lambda^{\text{fin}}$ stands for a finite sequence $(x_i)_{i\in\N}\in\Lambda$ with length $k$ so that $x_1,\ldots,x_k\in A$ and $x_i=\o$ for all $i>k$, while a block $(y_1\ldots y_k)\in B(\Lambda)$ stands for a finite subblock of some sequence of $\Lambda$ and therefore $y_1,\ldots,y_k\in \tilde{A}$ (with the condition that if $y_i=\o$ then $y_j=\o$ for all $i\leq j\leq k$).

\end{rmk}

We now recall the notion of a shift space from \cite{Ott_et_Al2014}. We first define the {\em follower set} and the {\em predecessor set} of a block $a\in B_n(\Lambda)$ (for $1\leq n <\infty$) in some set $\Lambda\subset \Sigma_A$ as

\begin{equation}\label{followersets}\F(\Lambda,a):=\{b\in B_1(\Lambda):\ ab\in B_{n+1}(\Lambda)\}\end{equation}
and
\begin{equation}\label{predecessorsets}\Pp(\Lambda,a):=\{b\in B_1(\Lambda):\ ba\in B_{n+1}(\Lambda)\},\end{equation}
respectively.

A subset $\Lambda\subseteq\Sigma_A$ is called a {\em shift space} over $A$ if the following three properties hold:

\begin{enumerate}
\item[1 -] $\Lambda$ is closed with respect to the topology of $\Sigma_A$;
\item[2 -] $\Lambda$ is invariant under the shift map, that is, $\s(\Lambda)\subseteq\Lambda$;
\item[3 -] $\Lambda$ satisfies the ``infinite extension property'', that is, $\O$ belongs to $\Lambda^{\text{fin}}$ if, and only if, $|\LA|=\infty$, while a finite sequence $x\neq\O$ belongs to $\Lambda^{\text{fin}}$ if, and only if, $|\F(\Lambda,x)|=\infty$.
\end{enumerate}

\begin{rmk} The equivalence between the definition given above of ``infinite extension property'' and the definition given in \cite{Ott_et_Al2014} follows from Proposition 3.7 in \cite{Ott_et_Al2014}.
\end{rmk}

From the definition of the subspace topology we have that cylinder sets of the form $Z(x,F)\cap\Lambda$ generate the topology in $\Lambda$.
Notice that properties 1 and 2 above are exactly the same ones that define a subshift over a finite alphabet. Property 3 assures that there always exist infinite sequences in a non empty shift space. In fact, it is
possible to prove that $\Lambda^{\text{inf}}$ is dense in $\Lambda$ (see Proposition 3.8 in \cite{Ott_et_Al2014}). 

An equivalent way to define a shift space $\Lambda\subset\Sigma_A$ is via a set of forbidden words. In fact, given $\mathbf{F}\subset \bigcup_{k\in \N}A^k$ we define \begin{equation}\label{forbidden_words}X_{\mathbf{F}}^{\text{inf}} =
\{ x \in \Sigma_A^{\text{inf}}: B(\{x\})\cap\mathbf{F}=\emptyset \} \text{, } X_{\mathbf{F}}^{\text{fin}} = \{ x \in B(X_{\mathbf{F}}^{\text{inf}}): |\F (X_{\mathbf{F}}^{\text{inf}},x)|=\infty \}
\end{equation} and $ X_{\mathbf{F}}:=X_{\mathbf{F}}^{\text{inf}} \cup X_{\mathbf{F}}^{\text{fin}}$.
In other words, $X_{\mathbf{F}}$ is the set of all infinite sequences that do not have a subblock in $\mathbf{F}$, union with the set of all finite sequences which satisfy the `infinite extension property'. Theorem 3.16 in \cite{Ott_et_Al2014} assures that $\Lambda\subset\Sigma_A$ is a shift space if and only if $\Lambda = X_{\mathbf{F}}$ for some $\mathbf{F}\subset \bigcup_{k\in \N}A^k$.

Finally, we remark that, whenever $L_\Lambda$ is finite, the definition of the Ott-Tomforde-Willis shift space $\Lambda$ coincides with the
standard definition of shift spaces over finite alphabets (\cite{Ott_et_Al2014},
Proposition 3.19).

We will say that a given nonempty shift $\Lambda=X_\mathbf{F}$ is a:\\

 {{\sc shift of finite type (SFT)}}: if $\mathbf{F}$ is finite;\\

 {{\sc $M$-step shift}}: if $\mathbf{F}\subseteq A^{M+1}$;\\

 {{\sc row-finite shift}}: if for all $a\in\LA$ we have that $\F(\Lambda,a)$ is a finite set (which is equivalent to $\Lambda^{\text{fin}}\subset\{\O\}$ \cite[Proposition 3.21]{Ott_et_Al2014});\\

 {{\sc column-finite shift}}: if for all $a\in\LA$ we have that $\Pp(\Lambda,a)$ is a finite set.\\

For standard shift spaces (over a finite alphabet) all the above classes coincide. However, this is not true when the alphabet is infinite (\cite{Ott_et_Al2014}, Remark 5.19). Furthermore, under the Ott-Tomforde-Willis definition of shift conjugacy, which requires a conjugacy to be length preserving, it has been shown in \cite{GR} that there are (M+1) step shifts that are not conjugate to M-step shifts.

\section{Sliding block codes for shift spaces over countable alphabets}\label{sliding block code}

Classically, sliding block codes are the most important type of maps between shift spaces, having a major role in coding theory. Sliding block codes between standard shift spaces over finite alphabets are maps defined from a local rule. More precisely, given two finite alphabets $A$ and $B$ and $\Lambda\subseteq A^\N$ and $\Gamma\subseteq B^\N$ shift spaces, a map $\Phi:\Lambda\to\Gamma$ is a sliding block code if there exist $\ell\geq 0$ and a map $\phi:B_{\ell+1}(\Lambda)\to L_\Gamma$ such that $\big(\Phi(x)\big)_n=\phi(x_{n}\ldots x_{n+\ell})$, for all $n\in\N$. In such a case, $\ell$ is called the anticipation of $\Phi$. We recall that sliding block codes between standard shift spaces over finite alphabets coincide with the class of continuous maps that commute with the shift map (this is the Curtis-Hedlund-Lyndon Theorem).

 The natural approach to generalizing the notion of sliding block codes for the infinite alphabet case is to say that $\Phi:\Lambda\to\Gamma$ is a sliding block code if there exists $\ell\geq 0$ and a local rule $\phi:B_{\ell+1}(\Lambda)\to L_\Gamma$ which acts as explained above \cite[Definition 1.4.1]{Ceccherini-Silberstein--Coornaert}. Although it seems to be natural, such a definition doesn't capture all possible maps coming from local rules. More precisely, one could ask about maps $\Phi:\Lambda\to\Gamma$ for which $\big(\Phi(x)\big)_n$ depends only on $x_n\ldots x_{n+\ell}$, but the anticipation $\ell$ is allowed to vary (note that in the finite alphabet case the existence of a local rule implies that it is possible to find a superior bound to the anticipation).
In \cite[Definition 7.1]{Ott_et_Al2014}, the authors present some advances in this direction, by defining sliding block codes between Ott-Tomforde-Willis shift spaces as maps $\Phi:\Lambda\subset\Sigma_A\to \Gamma\subset\Sigma_B$ which are
\begin{description}
\item[a)] continuous at the empty sequence;
\item[b)] for each $a\in A$, there exists $n(a)\in\N$ and a map $\phi^a:B_{n(a)}(\Lambda)\cap Z(a)\to L_\Gamma$, such that for all $x\in\Lambda^{\text{inf}}$ and $i\in\N$ it follows that $\big(\Phi(x)\big)_i=\phi^{x_i}(x_i,\ldots,x_{i+n(x_i)-1})$.
\end{description}
The above definition reflects the fact that a sliding block code could have a local rule with variable anticipation. However, of the two conditions above one is too restrictive while the other is too general. The assumption  that $\Phi$ is continuous at $\O$ is restrictive since it implies that the shift map $\s:\Lambda\subset\Sigma_A\to \Lambda\subset\Sigma_A$ is not a sliding block code, unless $\Lambda$ is a column-finite shift. Furthermore, this continuity also implies that $\Phi(\O)=\O$ (see \cite[Remark 7.2]{Ott_et_Al2014}) which is restrictive from a dynamical point of view   since it makes $\Lambda^{\text{fin}}$ and $\Lambda^{\text{inf}}$ invariant under $\Phi$ (although this feature can be interesting for coding it, there is no reason to consider just such class of dynamical systems). Also, the fact that the anticipation of the local rule used to determine $\big(\Phi(x)\big)_n$ depends exclusively on the symbol $x_n$ is a strong restriction. One can easily construct maps where the anticipation of the local rule used to decide $\big(\Phi(x)\big)_n$ doesn't depend on $x_n$ (for example, take $A=\N$ and $\Phi:\Sigma_A\to\Sigma_A$ given by $\big(\Phi(x)\big)_n:=\min\{x_i: n\leq i\leq n+x_{n+1}\}$ for all $x\in\Sigma_A^{\text{inf}}$). Moreover, the above definition implies that $\Phi(\Lambda^{\text{inf}})\subset\Lambda^{\text{inf}}$, which is also a restriction on the dynamics. On the other hand, the second statement is too general since it imposes a local rule only for infinite sequences, and leaves open the definition of $\Phi$ on $\Lambda^{\text{fin}}\setminus\{\O\}$.

Here we propose a new definition for sliding block codes between shift spaces over countable alphabets. This definition is based on the fact that the standard definition of sliding block code between shifts over finite alphabet is equivalent to saying that $\big(\Phi(\cdot)\big)_n$ is a simple function of the form $$\big(\Phi(x))_n=\sum_{a\in L_\Gamma}a\mathbf{1}_{C_a}\circ\sigma^{n-1}(x),$$ where $\{C_a:\ a\in L_\Gamma\}$ is a finite partition of $\Lambda$, each $C_a$ is a finite union of cylinders defined on the coordinates $1,\ldots, \ell+1$ and  $\sum$ stands for the symbolic sum.

Roughly, a map $\Phi:\Lambda\subset\Sigma_A\to \Gamma\subset\Sigma_B$ will be called a sliding block code if for all $x=(x_i)_{i\in \N}\in\Lambda$ and  $n\in\N$, we have that $\bigl(\Phi(x)\bigr)_n$ depends only on a finite number of entries of $x$, namely
$(x_{n}, \ldots, x_{n+\ell})$, for some $\ell\geq 0$, which doesn't depend on the value of $n$, but depends on the configuration of $x$ from $x_n$ on. In order to give an exact definition for a sliding block code we need the notion of a finitely defined set.

\begin{defn}\label{finitely_defined_set}
Let $\Lambda\subseteq\Sigma_A$ be a set. We say that a set $C\subset \Lambda$ is {\em finitely defined} in $\Lambda$  if there exist $I,J\subseteq \N$, $(\ell_i)_{i\in I},(n_j)_{j\in J}$, where $l_i$ and $n_j$ $\in \N\cup\{0\}$, and $(b_i)_{i\in I},(d_j)_{j\in J}$ with $b_i, d_j\in
B(\Sigma_A)$, such that
$$C=\{x\in \Lambda:\ (x_{1}\ldots x_{1+\ell_i})=b_i\ for\ some\ i\in I\}$$
and
$$C^c=\{x\in \Lambda:\ (x_{1}\ldots x_{1+n_j})=d_j\ for\ some\ j\in J\}.$$

If $C$ is finitely defined in $\Lambda$ and $\ell:=\sup_{i\in I}\ell_i$, we will say that $C$ has {\em anticipation} $\ell$ in $\Lambda$ and in such a case we will also say that $C$ has {\em range}
$\ell+1$ in $\Lambda$. If it is not possible to choose  $(\ell_i)_{i\in I}$ with $\ell<\infty$ in the definition of $C$, then we will say that $C$ is finitely defined set of $\Lambda$ with unbounded anticipation and with unbounded range.
\end{defn}

\begin{rmk} Note that the definition above says that $C$ and $C^c$ have the same property and therefore $C$ is a finitely defined set in $\Lambda$ if, and only if, $C^c$ is a finitely defined set in $\Lambda$. This captures the fact that given
any $x\in\Lambda$, there exist $\ell\geq 0$ such that the knowledge of $(x_1, \ldots, x_{1+\ell})$ provides knowledge of whether or not $x$ belongs to $C$.
 In particular, $\Lambda$ and $\emptyset$ are always finitely defined sets in $\Lambda$.
\end{rmk}

We prove next that the sets which generate the topology of the shift space are finitely defined in the shift space and then give more examples of finitely defined sets.

\begin{lem} Let $\Lambda\subseteq\Sigma_A$ be a shift space.
Let $x\in \Lambda^{\text{fin}}$ and let $F\subset \LA$ be a nonempty finite set. Then $Z(x)\cap\Lambda$ and $Z(x, F)\cap\Lambda$ are finitely defined sets in $\Lambda$. The anticipation of $Z(x)\cap\Lambda$ is $l(x)-1$ and the anticipation of $Z(x, F)\cap\Lambda$ is $l(x)$.
\end{lem}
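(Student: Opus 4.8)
The plan is to verify Definition \ref{finitely_defined_set} directly for the two sets in question by exhibiting the required index sets, finite words, and anticipations, using the description of the cylinders in \eqref{cylinder}. The point is that both $Z(x)\cap\Lambda$ and $Z(x,F)\cap\Lambda$ are cut out by conditions on a bounded window of coordinates, so the main work is bookkeeping: writing down families $(b_i)$ whose union of cylinders is the set, and families $(d_j)$ whose union of cylinders is the complement, with all words lying in $B(\Sigma_A)$.

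First I would treat $Z(x)\cap\Lambda$ for $x=(x_1\ldots x_k)\in\Lambda^{\text{fin}}$ with $l(x)=k$. Here membership is decided by reading coordinates $1,\ldots,k$: a point $y\in\Lambda$ is in $Z(x)$ iff $(y_1\ldots y_k)=(x_1\ldots x_k)$. So take $I$ a singleton with $b_1=(x_1\ldots x_k)$ and $\ell_1=k-1$, giving anticipation $k-1=l(x)-1$. For the complement, $y\notin Z(x)$ iff $(y_1\ldots y_k)$ differs from $(x_1\ldots x_k)$ in some coordinate; equivalently, for some $m\in\{1,\ldots,k\}$ we have $y_1=x_1,\ldots,y_{m-1}=x_{m-1}$ but $y_m\neq x_m$. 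Since $\LA$ may be infinite there are infinitely many such ``disagreeing'' words, but that is fine: the index set $J$ is allowed to be infinite, and each word $d_j=(x_1\ldots x_{m-1}c)$ with $c\in B_1(\Sigma_A)$, $c\neq x_m$ (and also the word $(\o)$ if $k=0$, i.e. $x=\O$, handled as the degenerate case $Z(\O)=\Sigma_A\cap\Lambda$ separately) lies in $B(\Sigma_A)$ and has range at most $k$, so the complement is indeed finitely defined; one must note $n_j\le k-1$ so nothing forces the anticipation of the complement above that of the set, but the definition only asks for the existence of such a description, not a matching bound. A subtle point worth a sentence: we must make sure the $b_i$ are actual blocks of $\Sigma_A$, i.e. honor the convention that once a $\o$ appears all later entries are $\o$; since $x\in\Sigma_A^{\text{fin}}$, the word $(x_1\ldots x_k)$ has all entries in $A$, so this is automatic.

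Next I would do $Z(x,F)\cap\Lambda$ with $F\subset\LA$ finite and nonempty. Now $y\in Z(x,F)$ iff $(y_1\ldots y_k)=(x_1\ldots x_k)$ \emph{and} $y_{k+1}\notin F$. The set of letters $c\in B_1(\Sigma_A)$ with $c\notin F$ is (possibly) infinite, so we index $I$ by these letters and set $b_c=(x_1\ldots x_k\,c)$, all of which are legitimate blocks (note $c$ could be $\o$, and $(x_1\ldots x_k\o)$ is a valid block since everything after position $k+1$ would be forced to $\o$ as well); each has range $k+1$, so the anticipation is $k=l(x)$. For the complement, $y\notin Z(x,F)$ iff either $(y_1\ldots y_k)\neq(x_1\ldots x_k)$ — described by the finitely many (here: indexed over the infinite $B_1$, but structurally the same) words as in the first part, of range $\le k$ — or $(y_1\ldots y_k)=(x_1\ldots x_k)$ and $y_{k+1}\in F$, described by the finitely many words $(x_1\ldots x_k\,a)$ with $a\in F$, of range $k+1$; the union of these two families of cylinders is exactly $(Z(x,F)\cap\Lambda)^c$, so it too is finitely defined, with $\sup_j n_j=k$.

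I do not expect a genuine obstacle here — the result is essentially a translation exercise — but the step that needs the most care is making the two displayed descriptions in Definition \ref{finitely_defined_set} \emph{simultaneously} correct, i.e. checking that the chosen $b_i$-cylinders really do union up to the set while the $d_j$-cylinders really do union up to the complement, rather than merely covering it; this amounts to verifying that the families are exhaustive (every $y\in\Lambda$ falls into one family or the other according to whether it is in $C$) and that no $b_i$-cylinder meets the complement. The only mildly delicate wrinkle is handling $x=\O$ (so $l(x)=0$): then $Z(\O)\cap\Lambda=\Lambda$ is trivially finitely defined with anticipation $-1$ in the convention $l(x)-1$, while $Z(\O,F)\cap\Lambda=\{y\in\Lambda: y_1\notin F\}$ is finitely defined with anticipation $0=l(\O)$, handled exactly as above with $k=0$; I would dispatch this as a remark or a short separate case at the start.
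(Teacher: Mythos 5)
Your proposal is correct and follows essentially the same route as the paper: a direct verification of Definition \ref{finitely_defined_set} by exhibiting one family of words $(b_i)$ carving out the set (the single word $x$ for $Z(x)$, the words $xc$ with $c\notin F$ for $Z(x,F)$) and another family $(d_j)$ carving out the complement, with the $x=\O$ case treated separately. The only cosmetic difference is that for the complement you use minimal-length ``first disagreement'' words while the paper enumerates all blocks of fixed length $k$ (resp.\ $k+1$) other than the chosen ones; both descriptions are valid and yield the stated anticipations.
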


\begin{proof}
Let $x\in\Lambda^{\text{fin}}$ and $F\subset\LA$ be a finite set. Suppose $l(x)=k>0$, set $b_1=(x_1\ldots x_k)$ and let
$\{d_1, d_2, \dots\}$ be an enumeration of $B_k(\Lambda)\setminus\{(x_1\ldots x_k)\}$ (Notice that $B_k(\Lambda)\setminus\{(x_1\ldots x_k)\}$ could contain $x_1\o \ldots \o$ for example). Then, taking $$
I = \{1\}, \hspace{0.5cm} \ell_1 = k-1, \hspace{0.5cm} b_1 = x, \hspace{1cm}J = \N, \hspace{0.5cm} n_j = k -1
$$
Definition \ref{finitely_defined_set} is verified for $Z(x)$.

Now we turn to $Z(x,F)$. Let $\{a_1,a_2,\ldots\}$ be an enumeration of $\tilde{A}\setminus F$. Take $I = J = \N$, and for each $i\in I$ let $b_i:=(x_1\ldots x_k a_i)$, and let $\{d_1,d_2,\ldots\}$ be an enumeration of $B_{k+1}(\Lambda)\setminus\{b_1,b_2,\ldots\}$. Hence $I$, $J$, $\{b_1,b_2,\ldots\}$, $\{d_1,d_2,\ldots\}$
and $\ell_i =n_j = k$, for all $i\in I$ and $j\in J$, verify Definition \ref{finitely_defined_set} for $Z(x,F)$.

Finally, suppose $l(x)=0$. In this case the proof for $Z(x,F)$ follows by taking $\{b_1,b_2,\ldots\}$ an enumeration of $\tilde{A}\setminus F$, $\{d_1,d_2,\ldots d_{|F|}\}$ an enumeration of $F$, $I=\N$, $J=\{1,\ldots,|F|\}$ and $l_i=n_j=0$ for all $i\in I$ and $j\in J$.
\end{proof}

\begin{ex} Below we give more examples and counterexamples of finitely defined sets:

\begin{enumerate}
\item[a)]Finite unions of cylinders in $\Sigma_A$ are finitely defined sets in $\Sigma_A$ with anticipation equal to the maximum anticipation among all cylinders which are taken in that union.

\item[b)] Let $0\leq k<\ell$, $F\subset A$ be a finite set and fix $x\in\Sigma_A^{\text{fin}}$ with $l(x):=\ell$. Define the set $C\subset\Sigma_A$ by $$C:
=\{w\in\Sigma_A: w_i=x_i\text{ for } k+1\leq i\leq\ell,\ w_{\ell+1}\notin F\}.$$ Then $C$ is a finitely defined set in $\Sigma_A$ with anticipation $\ell$.

\item[c)] If $A=\N$ then $C\subset\Sigma_A$, defined by $C=\bigcup_{k\in\N}\{(x_i)_{i\in\N}:\ x_i=k\ if\ i\leq k\}$, is a finitely defined set in $\Sigma_A$ with unbounded anticipation.

\item[d)] $\Sigma_A^{\text{fin}}$ and $\Sigma_A^{\text{inf}}$ are not finitely defined sets in $\Sigma_A$.

 \item[e)] If $\Lambda\subsetneq\Sigma_A$, $\Lambda \neq \{\O\}$, is a shift space then $\Lambda$ is not a finitely defined set in $\Sigma_A$ (to see this use the characterization of shift spaces in terms of forbidden words).

\item[f)] If $A=\N$ then $C\subset\Sigma_A$ defined by $C=\bigcup_{k\in\N}\{(x_i)_{i\in\N}:\ x_k=k\}$ is not a finitely defined set in $\Sigma_A$.

\item[g)] Any subset of $\Sigma_A$ that contains only infinite sequences is not finitely defined in $\Sigma_A$.

\item[h)] If $\Lambda$ is any shift space and $W\subset \Lambda$ any set such that either $\sup_{x\in W}l(x)<\infty$ or $\sup_{x\in W^c}l(x)<\infty$, then $W$ is a finitely defined set in $\Lambda$. In particular, if $x\in \Lambda^{\text{fin}}$, then $\{x\}$ is finitely defined in $\Lambda$.

\end{enumerate}
\end{ex}

\begin{rmk}
A finite union or intersection of finitely defined sets in some shift space $\Lambda$ is also a finitely defined set in $\Lambda$.
\end{rmk}

\begin{rmk}
Note that while for standard shifts over finite alphabets the class of finitely defined sets of a given shift space coincides with the class of clopen sets of that shift space (and so with the class of sets obtained as finite union of cylinders), the same is not true for the infinite alphabet case. In fact, for one-sided shift spaces over infinite alphabets, any clopen set is finitely defined, but most finitely defined sets are not clopen.

For example, if $a\in A$ and $C:=\{(x_i)_{i\in\N}\subset\Sigma_A:\ x_2=a\}$, then $C$ is an open finitely defined set which is not closed, while $C^c$ is a closed finitely defined set which is not open.
\end{rmk}

Finally recall that if $\Lambda\subset\Sigma_A$ is a shift space and $x\in\Lambda$ then, for all $n\in\N$, we have that $x_n=(\s^{n-1}(x))_1$.\\

\begin{defn}\label{defn_sliding block code}
Let $A$ and $B$ be two alphabets and let $\Lambda \subset \Sigma_A$ be a shift space. Suppose $\{C_a\}_{a\in B\cup\{\o\}}$ is a partition of $\Lambda$ such that:
\begin{description}\addtolength{\itemsep}{-0.5\baselineskip}

\item[\em 1.] for each $a\in B\cup\{\o\}$ the set $C_a$ is a finitely defined  set in $\Lambda$;

\item[\em 2.] $C_{\o}$ is shift invariant (that is, $\s(C_{\o})\subset C_{\o}$).

\end{description}

We will say that a map $\Phi:\Lambda\to\Sigma_B$ is a {\em sliding block code} if

\begin{equation}\label{LR_block_code}\bigl(\Phi(x)\bigr)_n=\sum_{a\in B\cup\{\o\}}a\mathbf{1}_{C_a}\circ\sigma^{n-1}(x),\quad \forall x\in\Lambda,\ \forall n\in\N, \end{equation} where $\mathbf{1}_{C_a}$ is the
characteristic function of the set $C_a$ and $\sum$ stands for the symbolic sum.

Let $\Phi$ be a sliding block code and $\ell_a$ the anticipation of each $C_a$. Define $\ell:=\sup_{a\in
B\cup\{\o\}}\ell_a$. If $\ell<\infty$ we will say that $\Phi$ is a {\em $\ell+1$-block code} with anticipation $\ell$, while if $\ell=\infty$ we will say that $\Phi$ has {\em unbounded anticipation}.
\end{defn}

\begin{rmk} Note that in the case that $\Lambda$ is a shift space over a finite alphabet, the above definition coincides with the standard definition of sliding block codes between shift spaces over finite alphabets.

\end{rmk}

\begin{rmk} Notice that, from Definition \ref{defn_sliding block code},
 if $\bigl(\Phi(x)\bigr)_n=\o$ then $\bigl(\Phi(x)\bigr)_m=\o$ for all $m\geq n$. This is required to avoid that $\o$ appears between symbols of $B$ in $\Phi(x)$.
\end{rmk}

\begin{rmk}\label{defn_alpha}
Equation \eqref{LR_block_code} means that any sliding block code $\Phi:\Lambda\to\Sigma_B$ can be defined in terms of a local rule $\alpha:\Omega\cup\Upsilon\to B\cup\{\o\}$. In fact, $\Phi:\Lambda\to\Sigma_B$ is a sliding block code if, and only if, there exist $\Omega,\Upsilon\subset B(\Lambda)$ such that

\begin{itemize}
\item $\Omega\cap\Upsilon=\emptyset$;

\item for each $a\in\Lambda$ there exists $w\in\Omega\cup\Upsilon$ which is its prefix;

\item each $w\in\Omega\cup\Upsilon$ is not a prefix of another word in $\Omega\cup\Upsilon$;

\item if $w\in\Upsilon$ then any suffix of $w$ also belongs to $\Upsilon$;

\end{itemize}
and there exists a function $\alpha:\Omega\cup\Upsilon\to B\cup\{\o\}$ such that $\alpha^{-1}(\o)=\Upsilon$ and, for all $x\in\Lambda$ and $n\in\N$,
\begin{equation}\label{block_map}
\bigl(\Phi(x)\bigr)_n = \alpha(x_nx_{n+1}\ldots x_{n+\ell}),\end{equation}
where $\ell\geq 0$ is the unique integer such that $(x_nx_{n+1}\ldots x_{n+\ell})\in\Omega\cup\Upsilon$ (whose existence and uniqueness is assured by the properties assumed for $\Omega$ and $\Upsilon$).
According to this formalism, we will say that $\Phi$ is a {\em $M$-block code} if $\Omega\cup\Upsilon= B_M(\Lambda)$.
\end{rmk}

\begin{ex}\label{sliding block code_examp} Next we give some examples and counterexamples of sliding block codes:\\

a) Let $A$ be any countable set and $\Lambda\subseteq\Sigma_A$ be a shift space. Then the shift map $\s:\Lambda\to\Lambda$ is a sliding block code with anticipation 1. In fact, by defining $C_a:=\{x\in\Lambda:\ x_2=a\}$ for all $a\in\LA\cup\{\o\}$, it follows that the sliding block code $\Phi:\Lambda\to\Lambda$, given by $\bigl(\Phi(x)\bigr)_n=\sum_{a\in \LA\cup\{\o\}}a\mathbf{1}_{C_a}\circ\sigma^{n-1}(x)$, coincides with the shift map.
We recall that the shift map is an example of a sliding block code which is not invertible. Furthermore, the shift map is continuous if and only if $\Lambda$ is a column-finite shift (see Theorem \ref{general-CHL-Theo_1}).\\

b) Let
$A=\N$ and $\{A_i\}_{i\in\N}$ be a partition of $A$ by non-singleton finite sets. Define $\Lambda\subseteq\Sigma_A$ to be the row-finite shift $\Lambda:=\{\O\}\cup\bigcup_{i\in \N}A_i^\N$. Let $\Phi:\Lambda\to\Lambda$ be the map given, for all $x\in\Lambda$ and $n\in\N$, by
$$\big(\Phi(x)\big)_n:=\max_{j\geq n} x_j,$$
with the convention that $\o> a$ for all $a\in A$.
It is immediate that $\Phi$ is shift commuting and one can check that $\Phi$ is not continuous. Furthermore, since $\Lambda^{\text{fin}}$ contains only the empty sequence $\O$, it follows that $C_{\o}=\Phi^{-1}(\O)=\{\O\}$, which
is shift invariant and finitely defined ($x\in C_{\o}$ if and only if $x_1=\o$). However, $\Phi$ is not a sliding block code, since for some sequences $x\neq \O$ we need to know all the entries $x_j$, with $j\geq n$, to
determine $\big(\Phi(x)\big)_n$.\\

c) Let $A=\N$ and consider the shift space $\Lambda:=\{(x_i)_{i\in\N}\in\Sigma_A:\ x_{i+1}\geq x_i-1\ \forall i\in\N\}$ with the convention that $\o>a$ for all $a\in A$. Then $\Phi:\Lambda\to\Sigma_A$, given by $\big(\Phi(x)\big)_n:=x_{n +x_n}$, is a non-continuous sliding block code (see Theorem \ref{general-CHL-Theo_1}) with  unbounded anticipation.\\

d)  Let $A=\N$ and define a map $\Phi:\Sigma_A\to \Sigma_A$ by
$$\big(\Phi(x)\big)_n:=\left\{\begin{array}{lcl} (x_n+1)/2 &,\ if& x_n\ is\ odd\\
                                           x_n /2 &,\ if& x_n\ is\ even\\
                                           \o      &,\ if& x_n=\o .\end{array}\right.$$
It follows that $\Phi$ is a sliding block code which is onto but not one-to-one. Furthermore, $\Phi$ is continuous (see Theorem \ref{general-CHL-Theo_1}).\\

e) Let
$A=\N$ and let $\{A_i\}_{i\in\N}$ be a partition of $A$ by non-singleton finite sets. Consider the shift space $\Lambda\subset \Sigma_A$ defined by $\Lambda:=\{\O\}\cup\bigcup_{i\in \N}A_i^\N$. For each $i\in\N$ let $\Phi_i:{A_i}^\N\to {A_i}^\N$ be a sliding block code (which is always continuous since it is defined on a full-shift over a finite alphabet) with anticipation $i$. Then the map $\Phi:\Lambda\to \Lambda$ given, for all $x\in\Lambda$, by $$\Phi(x):=\O\mathbf{1}_{\{\O\}}(x)+\sum_{i\in\N}\mathbf{1}_{A_i}(x)\Phi_i(x),$$ is a continuous sliding block code with unbounded anticipation.\\

f) Let $A:=\Z$ and take $\Lambda\subset\Sigma_A$ to be the shift space with infinite sequences $(x_i)_{i\in\N}$ that satisfy, for all $i\geq 2$, the following conditions: $x_{i}=x_{i-1}+1$ if $x_{i-1}\leq -2$, $x_i\geq 0$ if $x_{i-1}=-1$, and $x_i=x_{i-1}$ if $x_{i-1}\geq 0$ (So $\Lambda$ is the closure of the set of infinite sequences described above). Define $\Phi:\Lambda\to\Sigma_A$ by
$$\big(\Phi(x)\big)_n:=\left\{\begin{array}{lcl} -1 &,\ if& x_n<0\\
                                           \o &,\ if& x_n=0\\
                                           x_n &,\ if& x_n>0\\
                                            0     &,\ if& x_n=\o .\end{array}\right.$$
Then $\Phi:\Lambda\to\Phi(\Lambda)$ is an invertible 1-block code which is not continuous (see Theorem \ref{general-CHL-Theo_2}) and $\Phi^{-1}$ is a sliding block code with unbounded anticipation. \\

g) Let $k\in\N$ and consider the alphabet $A:=\{a\in\Z:\ a\leq k\}$. Let $\Lambda\subset\Sigma_A$ be the shift space whose transition rules are the same as the shift space in example {\em f)}. Define $\Phi:\Lambda\to\Sigma_A$ given by
$$\big(\Phi(x)\big)_n:=\left\{\begin{array}{lcl} 0 &,\ if& x_n<0\\
                                           \o &,\ if& x_n=0\\
                                           x_n &,\ if& x_n>0\\
                                            0     &,\ if& x_n=\o .\end{array}\right.$$
In this case $\Phi$ is an injective 1-block code which is continuous (see Theorem \ref{general-CHL-Theo_2}). Furthermore, $\Phi^{-1}$ is not a sliding block code. In fact, note that $\Phi(\O)=(000\ldots)$, where $(000,\ldots)$ is the constant sequence with the symbol 0. Hence, given a sequence starting with 0s, to compute $\big(\Phi^{-1}(000\ldots)\big)_1$ we need to check whether the sequence is constant or not, what is not a local feature. In fact, $\Phi(\Lambda)$ is not even a shift space (since it does not satisfy the infinite extension property).\\

h) Let $d,k\in\N$, $A=\N$ and $\Lambda\subset\Sigma_A$ be the shift space where the sequences $(x_i)_{i\in\N}$ satisfy the transition rules: $x_{i+1}=x_i-1$, whenever $x_i>k$, and $x_{i+1}$ is any symbol, whenever $x_i\leq k$. Define $\Phi:\Lambda\to\Sigma_A$ by
$$\big(\Phi(x)\big)_n:=\left\{\begin{array}{lll} x_n &,\ &if\ x_n,x_{n+1}\leq k\\
                                                 d &,\ & otherwise.\end{array}\right.$$
We have that $\Phi$ is a continuous 2-block code (see Theorem \ref{general-CHL-Theo_2}), which is not invertible. Furthermore, in this case $\Phi(\Lambda)$ is a shift space.\\

i) Set $d,k\in\N$ with $d>k$, $A=\N$ and let $\Lambda\subset\Sigma_A$ be the shift space where the sequences $(x_i)_{i\in\N}$ satisfy the transition rules: $x_{i+1}=x_i-1$, whenever $x_i>k$, and $x_{i+1}< k$, whenever $x_i\leq k$. Let $\Phi:\Lambda\to\Sigma_A$ be given by
$$\big(\Phi(x)\big)_n:=\left\{\begin{array}{lll} x_n &,\ &if\ x_n\leq k\\
                                                 d &,\ & otherwise.\end{array}\right.$$
We have that $\Phi$ is a continuous 1-block code (see Theorem \ref{general-CHL-Theo_2}) and that $\Phi(\Lambda)$ is a shift space. Furthermore, $\Phi$ is invertible on its image and $\Phi^{-1}$ is a continuous map. Note that $\Phi^{-1}$ is given by
 $$\bigl(\Phi^{-1}(y)\bigr)_n=\sum_{\alpha\in \LA\cup\{\o\}}\alpha\mathbf{1}_{D_\alpha}\circ\s^{n-1}(y)$$
 where
 $$D_\alpha=\left\{\begin{array}{lcl} Z(\alpha)\cap\Phi(\Lambda)                         &,\ if& \alpha\leq k \\\\
                                      Z(\underbrace{dd\ldots d}_{\alpha-k\ times},\{d\})\cap\Phi(\Lambda)  &,\ if& \alpha>k,\ \alpha\neq\O\\\\
                                      \{(ddd\ldots)\}                                    &,\ if& \alpha=\O
                               \end{array}\right. .$$
  Thus, since $D_{\o}$ is not a finitely defined set, it follows that $\Phi^{-1}$ is not a sliding block code.\\

\end{ex}

\subsection{Sliding block codes and continuous shift-commuting maps}\label{CHL-Theo}

The following results are immediate:

\begin{prop}\label{sliding block code->shift_commuting} Any sliding block code commutes with the shift map.
\end{prop}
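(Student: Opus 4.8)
The plan is to show directly from Definition \ref{defn_sliding block code} that if $\Phi:\Lambda\to\Sigma_B$ is a sliding block code then $\Phi\circ\s=\s\circ\Phi$ on $\Lambda$. First I would fix $x\in\Lambda$ and $n\in\N$ and compute the $n$th coordinate of each side. By definition, $\bigl(\Phi(\s(x))\bigr)_n=\sum_{a\in B\cup\{\o\}}a\mathbf{1}_{C_a}\circ\s^{n-1}(\s(x))$. Since $\s^{n-1}\circ\s=\s^n=\s^{(n+1)-1}$, this equals $\sum_{a\in B\cup\{\o\}}a\mathbf{1}_{C_a}\circ\s^{n}(x)=\bigl(\Phi(x)\bigr)_{n+1}=\bigl(\s(\Phi(x))\bigr)_n$, which is exactly the claim. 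So the computation is essentially a one-line manipulation using associativity of composition of the shift with its powers.

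There is, however, one point that needs care: for the expression $\bigl(\Phi(\s(x))\bigr)_n$ to make sense, one must know that $\s(x)\in\Lambda$, so that $\Phi(\s(x))$ is defined; this is immediate from property 2 in the definition of a shift space ($\s(\Lambda)\subseteq\Lambda$). A second, more substantive point is that one should check that the symbolic sums on both sides are genuinely comparable as coordinates of points in $\Sigma_B$ — in particular that $\Phi(x)$ and $\Phi(\s(x))$ lie in $\Sigma_B$, i.e.\ that whenever a coordinate equals $\o$ all later coordinates equal $\o$. This is where condition 2 of Definition \ref{defn_sliding block code} (shift-invariance of $C_{\o}$) enters: it guarantees $\bigl(\Phi(x)\bigr)_n=\o\Rightarrow\s^{n-1}(x)\in C_{\o}\Rightarrow\s^{m-1}(x)\in C_{\o}$ for all $m\geq n$, so $\Phi(x)\in\Sigma_B$, and likewise for $\s(x)$. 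Once both sides are known to be legitimate elements of $\Sigma_B$, equality coordinate-by-coordinate gives $\Phi(\s(x))=\s(\Phi(x))$.

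I do not expect any real obstacle here; the statement is labelled ``immediate'' precisely because the sliding-block-code formula \eqref{LR_block_code} is built around the shift powers $\s^{n-1}$, which makes shift-commutativity structural. If anything, the only thing to be slightly careful about is the bookkeeping of indices ($\s^{n-1}\circ\s=\s^{(n+1)-1}$) and noting that $\s(\O)=\O$ causes no issue. So the proof is: fix $x\in\Lambda$, $n\in\N$; use $\s(\Lambda)\subseteq\Lambda$ to know $\Phi(\s(x))$ is defined; write out both $n$th coordinates via \eqref{LR_block_code}; observe they agree because $\s^{n-1}\circ\s=\s^n$; and conclude $\Phi\circ\s=\s\circ\Phi$.
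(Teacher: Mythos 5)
Your proof is correct and is essentially the same one-line computation the paper gives: write $\bigl(\s(\Phi(x))\bigr)_n=\bigl(\Phi(x)\bigr)_{n+1}$ via \eqref{LR_block_code} and use $\s^{n-1}\circ\s=\s^n$ to identify it with $\bigl(\Phi(\s(x))\bigr)_n$. The extra remarks about $\s(\Lambda)\subseteq\Lambda$ and the role of the shift-invariance of $C_{\o}$ are sound but not needed beyond what the paper already assumes.
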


\begin{proof}
Let $\Phi:\Lambda\to\Gamma$ be a sliding block code, where $\Lambda\subseteq\Sigma_A$ and $\Gamma\subseteq\Sigma_B$.
For all $x\in\Lambda$ and $n\in\N$, it follows that

$$\bigl(\s(\Phi(x))\bigr)_n=\bigl(\Phi(x)\bigr)_{n+1}=\sum_{a\in L_\Gamma\cup\{\o\}}a\mathbf{1}_{C_a}\circ\sigma^n(x)=\sum_{a\in L_\Gamma\cup\{\o\}}a\mathbf{1}_{C_a}\circ\sigma^{n-1}(\s(x))=\bigl(\Phi(\s(x))\bigr)_{n}.$$

\end{proof}

\begin{cor}\label{sliding block code->preserves_period}
If $\Phi:\Lambda\to\Gamma$ is a sliding block code and $x\in\Lambda$ is a sequence with period $p\geq 1$ (that is, such that $\s^p(x)=x$) then $\Phi(x)$ has also period $p$.
\end{cor}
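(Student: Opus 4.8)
The plan is to reduce the statement directly to Proposition \ref{sliding block code->shift_commuting}. First I would record that shift-commutation iterates: starting from $\Phi\circ\s=\s\circ\Phi$, a trivial induction on $k$ gives $\Phi\circ\s^k=\s^k\circ\Phi$ for every integer $k\geq 0$ (the base case $k=0$ is the identity and the inductive step is one application of Proposition \ref{sliding block code->shift_commuting}). Applying this identity with $k=p$ to the point $x$ then yields
$$\s^p\bigl(\Phi(x)\bigr)=\Phi\bigl(\s^p(x)\bigr)=\Phi(x),$$
where the last equality uses the hypothesis $\s^p(x)=x$. Thus $\Phi(x)$ is fixed by $\s^p$, which is exactly the assertion that $\Phi(x)$ has period $p$.

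I do not expect any real obstacle here — the argument is a two-line consequence of shift-commutation. The only point worth spelling out is terminological: ``$y$ has period $p$'' is being used to mean $\s^p(y)=y$, not that $p$ is the least such integer. So there is no finer bookkeeping to do; the minimal period of $\Phi(x)$ may well be a proper divisor of $p$ (as already occurs for sliding block codes over finite alphabets), but it divides $p$ and hence $\Phi(x)$ has period $p$ in the stated sense. One could optionally add, for emphasis, that the same computation shows more generally that $\Phi$ maps the set of $\s^p$-fixed points of $\Lambda$ into the set of $\s^p$-fixed points of $\Gamma$, but this is not needed for the corollary as stated.
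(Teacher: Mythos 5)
Your proof is correct and follows exactly the paper's argument: iterate the shift-commutation from Proposition \ref{sliding block code->shift_commuting} to get $\s^p(\Phi(x))=\Phi(\s^p(x))=\Phi(x)$. The terminological remark about ``period $p$'' meaning $\s^p$-fixed rather than least period is accurate but not needed.
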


\begin{proof}

Since $\Phi$ commutes with $\s$ and $\s^p(x)=x$, it follows that $\s^p\bigl(\Phi(x)\bigr)=\Phi\bigl(\s^p(x)\bigr)=\Phi(x)$.

\end{proof}

\begin{cor}\label{sliding block code->emptysequence goes to constant}
If $\Phi:\Lambda\to\Gamma$ is a sliding block code then $\Phi(\O)$ is a constant sequence (that is, either $\Phi(\O)=\O$ or $\Phi(\O)=(ddd\ldots)$ for some $d\in L_\Gamma$).
\end{cor}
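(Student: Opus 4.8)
The plan is to use that the empty sequence is fixed by the shift, together with the two preceding corollaries. Assuming $\O\in\Lambda$ (otherwise $\Phi(\O)$ is not even defined and there is nothing to prove), I would first note that $\s(\O)=\O$, so $\O$ is a point of $\Lambda$ of period $p=1$. Corollary \ref{sliding block code->preserves_period} then immediately gives that $\Phi(\O)$ also has period $1$, i.e. $\s\bigl(\Phi(\O)\bigr)=\Phi(\O)$.

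The second step is the elementary observation that a sequence $y=(y_i)_{i\in\N}\in\Sigma_B$ satisfying $\s(y)=y$ must be constant: the identity $\s(y)=y$ says $y_{i+1}=y_i$ for every $i\in\N$, so every coordinate equals $y_1$. If $y_1=\o$ this forces $y=\O$; if $y_1=d\in B$ it forces $y=(ddd\ldots)$, and since $y=\Phi(\O)\in\Gamma$ we have $d\in L_\Gamma$. Applying this with $y=\Phi(\O)$ finishes the argument.

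Alternatively, one can avoid Corollary \ref{sliding block code->preserves_period} and read the conclusion directly off Definition \ref{defn_sliding block code}: since $\s^{n-1}(\O)=\O$ for all $n\in\N$, Equation \eqref{LR_block_code} yields $\bigl(\Phi(\O)\bigr)_n=\sum_{a\in L_\Gamma\cup\{\o\}}a\,\mathbf{1}_{C_a}(\O)$, which does not depend on $n$; as $\{C_a\}_{a\in L_\Gamma\cup\{\o\}}$ is a partition of $\Lambda$, there is a unique $a_0$ with $\O\in C_{a_0}$, hence $\bigl(\Phi(\O)\bigr)_n=a_0$ for all $n$, and one concludes as in the previous paragraph.

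There is no real obstacle here; the proof is essentially a one-liner from Corollary \ref{sliding block code->preserves_period}. The only point meriting a moment's care is the bookkeeping around the empty letter: checking that the constant sequence produced when $a_0=\o$ is precisely $\O$ (a legitimate element of $\Sigma_B^{\text{fin}}$), and that when $a_0\in B$ the resulting constant sequence lies in $\Sigma_B^{\text{inf}}$ with its unique letter belonging to $L_\Gamma$.
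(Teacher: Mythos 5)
Your proposal is correct and follows the same route as the paper: apply Corollary \ref{sliding block code->preserves_period} with period $p=1$ to get $\s\bigl(\Phi(\O)\bigr)=\Phi(\O)$, and observe that a fixed point of the shift must be a constant sequence. The extra care about the empty letter and the alternative argument via Equation \eqref{LR_block_code} are fine but not needed beyond what the paper does.
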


\begin{proof}

Since $\s(\O)=\O$, it follows from Corollary \ref{sliding block code->preserves_period} that $\s\big(\Phi(\O)\big)=\Phi(\O)$, which means that $\Phi(\O)$ is a constant sequence.

\end{proof}

\begin{cor}\label{sliding block code->finite seq goes to finite seq}
If $\Phi:\Lambda\to\Gamma$ is a sliding block code such that $\Phi(\O)=\O$ then the image of a finite sequence, say $x\in\Lambda^{\text{fin}}$, by $\Phi$ is a finite sequence in $\Gamma$ with length no greater than $l(x)$.
\end{cor}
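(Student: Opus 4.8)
The plan is to push the hypothesis $\Phi(\O)=\O$ through Definition \ref{defn_sliding block code} to conclude that $\O\in C_{\o}$, and then to exploit the fact that iterating $\s$ on a finite sequence eventually produces $\O$.

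First I would evaluate \eqref{LR_block_code} at $x=\O$ and $n=1$. Since $\{C_a\}_{a\in B\cup\{\o\}}$ is a partition of $\Lambda$, there is a unique $a$ with $\O\in C_a$, and then $\o=\bigl(\Phi(\O)\bigr)_1=a$, so necessarily $\O\in C_{\o}$.

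Next, fix $x\in\Lambda^{\text{fin}}$ and put $k:=l(x)$, so that $x=(x_1\ldots x_k)$ and hence $\s^{k}(x)=\O$. Because $\s(\O)=\O$, it follows that $\s^{n-1}(x)=\O$ for every $n\geq k+1$. Substituting this into \eqref{LR_block_code} and using $\O\in C_{\o}$ yields $\bigl(\Phi(x)\bigr)_n=\o$ for all $n\geq k+1$. Thus $\Phi(x)$ coincides with $\o$ from coordinate $k+1$ on, so $\Phi(x)$ is a finite sequence (and it lies in $\Gamma$ by the hypothesis $\Phi:\Lambda\to\Gamma$) with $l\bigl(\Phi(x)\bigr)\leq k=l(x)$, as claimed.

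The argument is essentially immediate, so I do not expect a genuine obstacle; the only thing to keep in mind is that the phrase ``$\Phi(x)$ is a finite sequence in $\Gamma$'' packages two assertions — membership in $\Gamma$, which is part of the hypothesis, and finiteness of length, which is exactly what the computation above establishes. Note also that we used only $\O\in C_{\o}$, not the full shift-invariance of $C_{\o}$ required in Definition \ref{defn_sliding block code}.
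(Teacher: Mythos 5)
Your proof is correct and is essentially the paper's argument: the paper simply invokes Proposition \ref{sliding block code->shift_commuting} to get $\s^{l(x)}\bigl(\Phi(x)\bigr)=\Phi\bigl(\s^{l(x)}(x)\bigr)=\Phi(\O)=\O$, which is the same computation you carry out coordinate-by-coordinate from \eqref{LR_block_code}. Your closing remark that only $\O\in C_{\o}$ is used (rather than full shift-invariance of $C_{\o}$) is a fair observation but does not change the substance.
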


\begin{proof}

If $x\in\Lambda^{\text{fin}}$ then $\s^{l(x)}(x)=\O$ and therefore $\s^{l(x)}\circ\Phi(x)=\Phi\circ\s^{l(x)}=\Phi(\O)=\O$.

\end{proof}

\subsubsection{Analogues of the Curtis-Hedlund-Lyndon Theorem}

In \cite{Ceccherini-Silberstein--Coornaert}, Theorem 1.9.1, the authors prove that for two shift spaces $\Lambda\subset\Sigma_A$ and $\Gamma\subset\Sigma_B$, $\Phi:\Lambda^{\text{inf}}\to\Gamma^{\text{inf}}$ is such that there exists $\ell\geq 0$ and a local rule $\phi:A^{\ell+1}\to B$ such that $\big(\Phi(x)\big)_n=\phi(x_n\ldots x_n+\ell)$ for all $n$ if, and only if, $\Phi$ is uniformly continuous and shift commuting. In other words, \cite{Ceccherini-Silberstein--Coornaert} gives a version of the Curtis-Hedlund-Lyndon Theorem for the case when sliding block codes have local rules with bounded anticipation and the shift spaces were not compacted.

Theorem \ref{general-CHL-Theo_1} and \ref{general-CHL-Theo_2} below give the sufficient and necessary conditions on sliding block codes between one-sided Ott-Tomforde-Willis shifts under which it is possible to obtain a Curtis-Hedlund-Lyndon Theorem.\\

\begin{theo}\label{general-CHL-Theo_1} Let $\Lambda\subset\Sigma_A$ and $\Gamma\subset\Sigma_B$ be two shift spaces.
Suppose that $\Phi:\Lambda\to\Gamma$ is a map such that $\Phi(\O)=\O$ and $C_{\o}:=\Phi^{-1}(\O)$ is a finitely defined set. Then $\Phi$ is continuous and shift commuting if, and only if, $\Phi$ is a sliding block code given by $\bigl(\Phi(x)\bigr)_n=\sum_{a\in L_\Gamma\cup\{\o\}}a\mathbf{1}_{C_a}\circ\s^{n-1}(x)$ such that,
for all $a\in L_\Gamma$, the set $C_a$ is a finite (possibly empty) union of generalized cylinders of $\Lambda$.
\end{theo}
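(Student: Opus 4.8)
The plan is to prove the two directions separately, with the forward direction (continuous + shift-commuting $\Rightarrow$ sliding block code of the stated form) being the substantial one. For the converse, suppose $\Phi$ is a sliding block code with each $C_a$ ($a \in L_\Gamma$) a finite union of generalized cylinders of $\Lambda$. Shift-commutativity is already Proposition~\ref{sliding block code->shift_commuting}. For continuity, I would check continuity at an arbitrary point $x$ by examining a basic cylinder neighborhood $Z(y,F)\cap\Gamma$ of $\Phi(x)$ and pulling it back: because $\Phi(x)$ is determined coordinatewise by the indicator functions $\mathbf 1_{C_a}\circ\s^{n-1}$, membership of $\Phi(z)$ in a length-$k$ cylinder is governed by the sets $C_a$ evaluated at $x,\s(x),\dots,\s^{k-1}(x)$; since each $C_a$ is a finite union of generalized cylinders, hence clopen, and $\s$ is continuous except possibly at $\O$, one gets an open neighborhood of $x$ on which $\Phi$ stays inside $Z(y,F)\cap\Gamma$. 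The one delicate point is continuity at $\O$ and at finite sequences, but the hypothesis $\Phi(\O)=\O$ together with Corollary~\ref{sliding block code->finite seq goes to finite seq} and the clopenness of the $C_a$ handle this; one uses that $Z(\O,F)$ is (the complement of a finite union of cylinders, hence) clopen.

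For the forward direction, assume $\Phi$ is continuous and shift-commuting with $\Phi(\O)=\O$ and $C_{\o}:=\Phi^{-1}(\O)$ finitely defined. The first step is to define, for each $a\in L_\Gamma$, the set $C_a := \{x\in\Lambda : (\Phi(x))_1 = a\}$; shift-commutativity then forces $(\Phi(x))_n = (\Phi(\s^{n-1}x))_1 = \sum_a a\,\mathbf 1_{C_a}(\s^{n-1}x)$, which is precisely \eqref{LR_block_code}, so $\{C_a\}_{a\in L_\Gamma}\cup\{C_{\o}\}$ is automatically a partition of $\Lambda$. The second step is to show each $C_a$ ($a\in L_\Gamma$) is open: $C_a = (\mathrm{ev}_1\circ\Phi)^{-1}(\{a\})$ where $\mathrm{ev}_1(y)=y_1$, and $\{y\in\Gamma : y_1=a\} = Z(a)\cap\Gamma$ is clopen in $\Gamma$, so by continuity of $\Phi$, $C_a$ is clopen in $\Lambda$. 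The third step is the key reduction: a clopen subset of $\Lambda$ that is disjoint from a neighborhood of $\O$ must be a \emph{finite} union of generalized cylinders. Here I would use that $C_a\subseteq \Lambda\setminus C_{\o}$, and since $C_{\o}$ is finitely defined and $\s$-invariant with $\O\in C_{\o}$ (as $\Phi(\O)=\O$), its complement $\Lambda\setminus C_{\o}$ has bounded length, i.e. $\sup_{x\in\Lambda\setminus C_{\o}} l(x)<\infty$; combined with compactness of $\Lambda$ and the fact that the generalized cylinders form a basis of clopen sets, one covers the clopen set $C_a$ by finitely many basic clopen sets contained in it, and after refining, each is a generalized cylinder $Z(w,F)\cap\Lambda$. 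Finally, shift-invariance of $C_{\o}$ and $\Phi(\O)=\O$ must be checked to be consistent — but these are given — and one verifies that the anticipations $\ell_a$ of the $C_a$ are finite so that $\Phi$ is genuinely a block code (this is where boundedness of $l$ on $\Lambda\setminus C_{\o}$ is used again).

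The main obstacle I anticipate is the third step: passing from ``$C_a$ clopen and avoiding a neighborhood of $\O$'' to ``$C_a$ is a \emph{finite} union of generalized cylinders.'' Clopenness alone gives a cover by basic sets, and compactness gives a finite subcover of $C_a$, but one must ensure the finitely many basic sets can be taken \emph{inside} $C_a$ (so that their union equals $C_a$, not merely covers it). This is where I expect to need that $C_{\o}$ is finitely defined in an essential way: it guarantees that "far from $\O$" the space $\Lambda$ looks like a finite-alphabet-like shift on bounded-length windows, so that the clopen set $C_a$, living in that region, is actually determined by finitely many coordinates uniformly — giving the finite union. I would make this precise by showing that $\sup\{ l(x) : x\in\Lambda\setminus C_{\o}\} =: N < \infty$ and then that every clopen subset of the compact set $\Lambda\setminus C_{\o}$ depending only on coordinates $1,\dots,$ (something finite) is a finite union of cylinders — essentially reducing to the classical finite-alphabet fact on the relevant finite truncation. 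The rest is bookkeeping with the local rule $\alpha$ of Remark~\ref{defn_alpha}.
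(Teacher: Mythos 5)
Your overall architecture for the forward direction matches the paper's, but the way you execute your ``third step'' rests on a false claim. You assert that, because $C_{\o}$ is finitely defined, shift-invariant and contains $\O$, its complement satisfies $\sup_{x\in\Lambda\setminus C_{\o}}l(x)<\infty$. This is not true: take $\Lambda=\Sigma_A$ with $A$ infinite and $\Phi=\mathrm{id}$; then $C_{\o}=\{\O\}$ is finitely defined (with anticipation $0$) and shift-invariant, yet $\Lambda\setminus C_{\o}$ contains finite sequences of every length. Fortunately, the obstacle you introduced this claim to overcome is not actually there: since the generalized cylinders form a \emph{basis} of the topology of $\Lambda$, the open set $C_a=\Phi^{-1}(Z(a)\cap\Gamma)$ is a union of generalized cylinders each \emph{contained in} $C_a$; and since $C_a$ is also closed in the compact space $\Lambda$, it is compact, so finitely many of those basic sets already cover it and their union is exactly $C_a$. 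That is all the paper uses. (Likewise, no separate verification that the anticipations $\ell_a$ are finite is needed: a finite union of generalized cylinders automatically has finite anticipation, and Definition~\ref{defn_sliding block code} does not require $\sup_a\ell_a<\infty$ in any case.)

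The more serious gap is in the converse, which is where the paper spends nearly all of its effort. Your sketch (``membership of $\Phi(z)$ in a length-$k$ cylinder is governed by the $C_a$ evaluated at $x,\s(x),\dots$, and $\s$ is continuous except possibly at $\O$'') does not confront the actual difficulty: $\s$ is \emph{not} continuous at $\O$, so sets of the form $\{x:\s^{n-1}(x)\in C_a\}$ need not be clopen even when $C_a$ is, and one cannot simply pull back cylinders coordinate by coordinate. The real work is at points $\bar x$ for which $\Phi(\bar x)$ is a finite sequence and at finite $\bar x$. For instance, when $\bar x\in C_{\o}$ and $x^i\to\bar x$ one must show that $\big(\Phi(x^i)\big)_1$ eventually avoids any prescribed finite $F\subset L_\Gamma$; here the key facts are that $\bigcup_{a\in F}C_a$ is clopen and misses $\bar x$, and that every cylinder occurring in some $C_a$ has nonempty base word because $\O\in C_{\o}$. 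When $\bar x\in\Lambda^{\text{fin}}$ with $\Phi(\bar x)=(y_1\dots y_N)$, one must enlarge the forbidden set in the neighborhood $Z(\bar x,F)\cap\Lambda$ so as to avoid the finitely many cylinders of the relevant $C_a$ that extend $\s^{N}(\bar x)$. The paper carries this out through a three-case sequential argument according to whether $l(\Phi(\bar x))$ is $0$, finite positive, or infinite; your proposal as written leaves this, the heart of the theorem, unproved.
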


\begin{proof}

\begin{description}
\item[$(\Longrightarrow)$] Suppose that $\Phi$ is continuous and shift commuting. Since $\Phi\circ\s=\s\circ\Phi$ it follows that $\s(C_{\o})\subseteq C_{\o}$.

Notice that, for all $a\in L_\Gamma$, the cylinder $Z(a)\cap \Gamma$ is clopen in $\Gamma$ and so $C_a:=\Phi^{-1}(Z(a)\cap \Gamma)$ is clopen in $\Lambda$. Since $\Lambda$ is compact, it follows that each $C_a$ is a compact set. Hence, whenever $C_a$ is not empty, it can be written as a finite union of generalized cylinders of $\Lambda$. Thus, any $C_a$ is a finitely defined set.
Since by hypothesis $C_{\o}$ is a finitely defined set (empty or not), it follows that $\{C_a\}_{a\in L_\Gamma\cup\{\o\}}$ is a partition of $\Lambda$ into finitely defined sets.

Now, for all $x\in\Lambda$, to determine $(\Phi(x))_1$ it is only necessary to know what set $C_a$ contains $x$, that is, $\bigl(\Phi(x)\bigr)_1=\sum_{a\in L_\Gamma\cup\{\O\}}a\mathbf{1}_{C_a}(x)$. Hence, since
$\Phi\circ\s^n=\s^n\circ\Phi$, we have that
$$\bigl(\Phi(x)\bigr)_n=\bigl(\sigma^{n-1}(\Phi(x))\bigr)_1=\bigl(\Phi(\sigma^{n-1}(x))\bigr)_1=\sum_{a\in L_\Gamma\cup\{\O\}}a\mathbf{1}_{C_a}(\sigma^{n-1}(x)).$$\\

\item[$(\Longleftarrow)$] For the converse, suppose that $\Phi:\Lambda\to\Gamma$ is a sliding block code where, for all $a\in L_\Gamma$, the set $C_a$ is either empty or a finite union of generalized cylinders of $\Lambda$. Since Proposition \ref{sliding block code->shift_commuting} assures that $\Phi$ commutes with the shift map, we just need to check that $\Phi$ is continuous. The assumption $\Phi(\O)=\O$ means that $\O\in C_{\o}$ and, therefore, for each $a\in L_\Gamma$, the set $C_a$ is a union of cylinders of the form $Z(x,F)$ with $x\neq \O$.

    Given $\bar x\in\Lambda$, let $(x^i)_{i\geq 1}$ be a sequence converging to $\bar x$ -- we shall check that $\Phi(x^i)\to\Phi(\bar x)$. Without loss of generality, we will assume that: if $x^i\to\O$, then $x_1^i\neq x_1^j$ for all $i\neq j$; if $x^i\to \bar x\in\Lambda^{\text{fin}}\setminus\{\O\}$, then $x^i_n=\bar x_n$ for all $n\leq l(\bar x)$, and $x^i_{l(\bar x)+1}\neq x^j_{l(\bar x)+1}$ for all $i\neq j$; if $x^i\to \bar x\in\Lambda^{\text{inf}}$, then $x^i_n=\bar x_n$ for all $n\leq i$.  Hence, defining $N_{\bar x}:=l(\Phi(\bar x))$ (we notice that $0\leq N_{\bar x}\leq l(\bar x)$ since $\Phi(\O)=\O$), we need to consider the three cases below:

    \begin{description}

            \item[i. $N_{\bar x}=0$:] This case is equivalent to $\bar x\in C_{\o}$, and we shall consider three subcases:

             Suppose $\bar x=\O$. If $\Phi(x^i)=\O$, except for a finite number of indexes $i$, then we directly have that $\Phi(x^i)\to\O$. Suppose that there is a subsequence $(x^{i_k})_{k\geq 1}$ such that $\Phi(x^{i_k})\neq \O$ for all $k\geq 1$. Then, since each $x^{i_k}$ starts with a symbol different from the others and each $C_a$ is a finite union of cylinders of the form $Z(x,F)$, with $x\neq \O$, we have that it is not possible that an infinite number of points $x^{i_k}$ are contained in a given set $C_a$. In other words, the sequence $(a_k)_{k\geq 1}$ of symbols of $L_\Gamma$, defined by $a_k:=\big(\Phi(x^{i_k})\big)_1$ for all $k\geq 1$, has no infinite repetition of any symbol. Therefore, $\Phi(x^{i_k})\to\O$ as $k\to\infty$.

             Secondly, suppose $\bar x\in\Lambda^{\text{fin}}\setminus\{\O\}$. If there are only a finite number of indices $i$ such that $x^i\notin\ C_{\o}$, then it is direct that $\Phi(x^i)\to\O$ as $i\to\infty$. If that is not the case, it is not possible that infinitely many elements $x^{i}$ belong to the same $C_a$. In fact, since $x^{i}_n=\bar x_n$ for all $n\leq l(\bar x)$ and $i\geq 1$, and $C_a$ is a finite union of cylinders of the form $Z(x,F)\cap\Lambda$ with $x\neq\O$, it follows that $x^{i}\in C_a$ for infinitely many indexes $i$ if, and only if, $Z(\bar x,F)\cap\Lambda\subset C_a$. But this means that $\bar x\in C_a$ and then $\Phi(\bar x)\neq \O$. Hence, denoting by $C_{a_i}$ the set containing $x^i$, we have that in the sequence $(a_i)_{i\geq 1}$ there are no infinite repetitions and, since $\big(\Phi(x^i)\big)_i=a_i$, this implies that $\Phi(x^i)\to\O$ as $i\to\infty$.

             Finally, suppose $\bar x\in\Lambda^{\text{inf}}$. As in the previous case, when $x^i\notin\ C_{\o}$ only for a finite number of indices $i$ it is direct that $\Phi(x^i)$ converges to $\O$. Also as before, each set $C_a$ can contain only a finite number of elements $x^i$. Indeed, suppose by contradiction that there exists a subsequence $(x^{i_k})_{k\geq 1}$ and $a\in L_\Gamma$ such that $x^{i_k}\in C_a$ for all $k\geq 1$. Since $x^{i_k}_n=\bar x_n$ for all $n\leq i_k$ and $k\geq 1$, and since $C_a$ is a finite union of cylinders of the type $Z(x,F)\cap\Lambda$ with $x\neq \O$, it follows that $C_a$ should contain a cylinder $Z(\bar x_1\ldots\bar x_t,F)\cap\Lambda$, for some $t\geq 1$ and $F\not\ni \bar x_{t+1}$. But this means that $\bar x\in C_a$ what contradicts that $\Phi(\bar x)=\O$. Hence we conclude that $\Phi(x^i)\to\O$ using the same argument used in the case $\bar x\in\Lambda^{\text{fin}}\setminus\{\O\}$.

            \item[ii. $0<N_{\bar x}<\infty$:] We remark that since $x^i\to \bar x$ then for all $1\leq n\leq l(\bar x)+1$ we have $\s^{n-1}(x^i)\to \s^{n-1}(\bar x)$.

            Let $\bar y=(\bar y_n)_{n\leq N_{\bar x}}:=\Phi(\bar x)$. We shall prove that for each finite subset $F\subset L_\Gamma$ there exits $I_F$ such that $\Phi(x^i)\in Z(\bar y,F)\cap\Gamma$ for all $i\geq I_F$.

            For each $1\leq n\leq N_{\bar x}$ we have that $\s^{n-1}(\bar x)\in C_{\bar y_n}$, so
             there exists $I(n)\geq 1$ such that $\s^{n-1}(x^i)\in C_{\bar y_n}$ for all $i\geq I(n)$ and then $\big(\Phi(x^i)\big)_n=\bar y_n$ for all $i\geq I(n)$. Therefore, taking $I_1:=\max_{1\leq n\leq N_{\bar x}} I(n)$ it follows that $\big(\Phi(x^i)\big)_n=\bar y_n$ for all $i\geq I_1$ and $1\leq n\leq N_{\bar x}$.

            On the other hand, $\s^{N_{\bar x}}(\bar x)\in C_{\o}$ and, since $\s^{N_{\bar x}}(x^i)\to \s^{N_{\bar x}}(\bar x)$ from subcase {\em i.} above, it follows that $\Phi(\s^{N_{\bar x}}(x^i))\to \Phi(\s^{N_{\bar x}}(\bar x))=\O$. This means that for any finite set $F\subset L_\Gamma$, there exits $I_F\geq I_1$ such that $\Phi(\s^{N_{\bar x}}(x^i))\in Z(\O,F)\cap\Gamma$ for all $i\geq I_F$. Hence, we have that $\Phi(x^i)\in Z(\bar y,F)\cap\Gamma$ for all $i\geq I_F$.

            \item[iii. $N_{\bar x}=\infty$:] In this case necessarily $l(\bar x)=\infty$. Hence we can use the same argument in the first part of subcase {\em ii.} above.

            Let $\bar y=(\bar y_n)_{n\geq 1}:=\Phi(\bar x)$. Since $x^i\to \bar x$ then, for all $n\geq 1$, we have that $\s^{n-1}(x^i)\to \s^{n-1}(\bar x)$. So , for each $n\geq 1$, there exists $I(n)$ such that $\s^{n-1}(x^i)\in C_{\bar y_n}$ for all $i\geq I(n)$. Hence $\big(\Phi(x^i)\big)_n=\bar y_n$ for all $i\geq I(n)$. Given $Z(\bar y_1\ldots \bar y_K)\cap\Gamma$ a neighborhood of $\bar y$, we can take $I:=\max_{1\leq n\leq K}I(n)$ and for all $i\geq I$ we have that $\Phi(x^i)\in Z(\bar y_1\ldots \bar y_K)\cap\Gamma$. Thus $\Phi(x^i)\to\bar y$.
        \end{description}
\end{description}
\end{proof}

The above theorem characterizes shift commuting continuous maps which map the empty sequence to the empty sequence. Notice that this excludes maps from infinite-alphabet shift spaces to finite-alphabet ones, since finite-alphabet shift spaces do not contain the empty sequence. What our next result shows is that, essentially, this is the only case which is excluded.

\begin{theo}\label{general-CHL-Theo_2} Let $\Lambda\subset\Sigma_A$ and $\Gamma\subset\Sigma_B$ be shift spaces. Take $d\in B$ and suppose $\Phi:\Lambda\to\Gamma$ is a map such that $\Phi(\O)=(ddd\ldots)$ and $C_{\o}:=\Phi^{-1}(\O)$ is finitely defined in the case that $\O\in\Gamma$. Then $\Phi$ is continuous and commutes with the shift map if, and only if, $\Phi$ is a sliding block code given by $\bigl(\Phi(x)\bigr)_n=\sum_{a\in L_\Gamma\cup\{\o\}}a\mathbf{1}_{C_a}\circ\s^{n-1}(x)$ such that,
for all $a\in L_\Gamma$, the set $C_a$ is a finite (maybe empty) union of generalized cylinders of $\Lambda$ with the following properties:
\begin{enumerate}
\item[1 -] $C_a$ is empty for all except a finite number of $a\in L_\Gamma$; and,
\item[2 -] for each $M\geq 1$ there exists a finite set $F_M\subset \LA$ such that $\s^{n-1}\big(Z(\O,F_M)\cap\Lambda\big)\subset C_d$, for all $1\leq n\leq M$.
\end{enumerate}

\end{theo}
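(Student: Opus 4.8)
The plan is to follow the proof of Theorem~\ref{general-CHL-Theo_1}, the new ingredients being the role of the point $\O$ and of the extra conditions~1 and~2. For the implication~($\Longrightarrow$), assume $\Phi$ is continuous and commutes with $\s$. Exactly as in Theorem~\ref{general-CHL-Theo_1}, for each $a\in L_\Gamma$ the set $C_a:=\Phi^{-1}\bigl(Z(a)\cap\Gamma\bigr)$ is clopen in the compact space $\Lambda$, hence a finite (possibly empty) union of generalized cylinders of $\Lambda$; $C_{\o}:=\Phi^{-1}(\O)$ is finitely defined by hypothesis (and $C_{\o}=\emptyset$ when $\O\notin\Gamma$); $\s(C_{\o})\subseteq C_{\o}$ since $\Phi\circ\s=\s\circ\Phi$; and since the $C_a$ partition $\Lambda$ with $\bigl(\Phi(x)\bigr)_1$ determined by the one containing $x$, composing with $\s^{n-1}$ (using $\Phi\circ\s=\s\circ\Phi$) gives $\bigl(\Phi(x)\bigr)_n=\sum_{a\in L_\Gamma\cup\{\o\}}a\,\mathbf{1}_{C_a}\circ\s^{n-1}(x)$, so $\Phi$ is a sliding block code. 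Condition~2 then follows from continuity at $\O$: since $\Phi(\O)=(ddd\ldots)$ and $Z(\underbrace{d\cdots d}_{M})\cap\Gamma$ is a neighbourhood of it, there is a finite $F_M\subset\LA$ with $\Phi\bigl(Z(\O,F_M)\cap\Lambda\bigr)\subseteq Z(\underbrace{d\cdots d}_{M})\cap\Gamma$, which says exactly that $\s^{n-1}\bigl(Z(\O,F_M)\cap\Lambda\bigr)\subseteq C_d$ for $1\le n\le M$.

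Condition~1 is the step I expect to require the most care. Suppose, for contradiction, that $C_{a_k}\neq\emptyset$ for infinitely many pairwise distinct $a_k\in L_\Gamma$; discarding at most one term we may assume $a_k\neq d$ for all $k$. Picking $x^k\in C_{a_k}$, condition~2 with $M=1$ forces each $x^k$ into the compact set $K:=\Lambda\setminus\bigl(Z(\O,F_1)\cap\Lambda\bigr)=\bigcup_{b\in F_1}\bigl(Z(b)\cap\Lambda\bigr)$, so after passing to a subsequence $x^{k_j}\to\bar x\in K$, and by continuity $\Phi(x^{k_j})\to\Phi(\bar x)$. Since the first coordinates $\bigl(\Phi(x^{k_j})\bigr)_1=a_{k_j}$ are pairwise distinct, this limit must be $\O$ (a convergent sequence in $\Sigma_B$ whose first coordinates are pairwise distinct converges to $\O$), so $\bar x\in C_{\o}$. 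But $\Phi\circ\s=\s\circ\Phi$ together with $\Phi(\O)=(ddd\ldots)\neq\O$ yield $\s^{l(z)}\bigl(\Phi(z)\bigr)=\Phi(\O)\neq\O$ for every $z\in\Lambda^{\text{fin}}$, so $C_{\o}\subseteq\Lambda^{\text{inf}}$, and in particular $\bar x\in\Lambda^{\text{inf}}$; then the finite definedness of $C_{\o}$ supplies a block $\bar x_1\ldots\bar x_{1+\ell}$ (all letters in $A$) whose occurrence in any point of $\Lambda$ forces that point into $C_{\o}$, whence $x^{k_j}\in C_{\o}$ for $j$ large — contradicting $\bigl(\Phi(x^{k_j})\bigr)_1=a_{k_j}\neq\o$. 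This contradiction proves condition~1.

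For~($\Longleftarrow$), $\Phi$ commutes with $\s$ by Proposition~\ref{sliding block code->shift_commuting}, so only continuity remains; fix $x^i\to\bar x$ in $\Lambda$. I first record two facts. (a) If $x^i\to\O$, then for every $K$ condition~2 with $M=K$ puts $x^i$ in $Z(\O,F_K)\cap\Lambda$ eventually, hence $\bigl(\Phi(x^i)\bigr)_n=d$ for $n\le K$, and therefore $\Phi(x^i)\to(ddd\ldots)=\Phi(\O)$. (b) $\Phi$ is continuous at every point of $C_{\o}$: as in the previous paragraph $C_{\o}\subseteq\Lambda^{\text{inf}}$, so the finite definedness of $C_{\o}$ gives, for $\bar x\in C_{\o}$, a block $\bar x_1\ldots\bar x_{1+\ell}$ forcing membership in $C_{\o}$, whence $x^i\in C_{\o}$ eventually and $\Phi(x^i)\to\O=\Phi(\bar x)$.

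With (a) and (b) the remaining cases follow the scheme of Theorem~\ref{general-CHL-Theo_1}. If $\bar x=\O$ use (a); if $\bar x\in C_{\o}$ use (b). If $\bar x\in\Lambda^{\text{fin}}$ with $l(\bar x)=m\ge 1$, then $\s^{m}\bigl(\Phi(\bar x)\bigr)=\Phi(\O)=(ddd\ldots)$ forces $\Phi(\bar x)=(\bar y_1\ldots\bar y_m\,ddd\ldots)$ with each $\bar y_n\in L_\Gamma$; for $1\le n\le m$ the map $\s^{n-1}$ is continuous at $\bar x$ (its orbit up to time $n-1$ avoids $\O$) and $\s^{n-1}(\bar x)\in C_{\bar y_n}$, which is open, so $\bigl(\Phi(x^i)\bigr)_n=\bar y_n$ eventually, while $\s^{m}(x^i)\to\s^{m}(\bar x)=\O$ gives, by (a), $\s^{m}\bigl(\Phi(x^i)\bigr)=\Phi\bigl(\s^{m}(x^i)\bigr)\to(ddd\ldots)$, so that an arbitrarily long block of $\Phi(x^i)$ beyond coordinate $m$ consists of $d$'s; assembling these estimates gives $\Phi(x^i)\to\Phi(\bar x)$. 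Finally, if $\bar x\in\Lambda^{\text{inf}}\setminus C_{\o}$, put $N:=l\bigl(\Phi(\bar x)\bigr)\ge 1$: for $1\le n\le N$ the map $\s^{n-1}$ is continuous at $\bar x$ (iterates of an infinite sequence avoid $\O$) and $\s^{n-1}(\bar x)$ lies in the open set $C_{(\Phi(\bar x))_n}$, so $\bigl(\Phi(x^i)\bigr)_n=\bigl(\Phi(\bar x)\bigr)_n$ eventually; if $N=\infty$ this already yields $\Phi(x^i)\to\Phi(\bar x)$, and if $N<\infty$ then $\s^{N}(\bar x)\in C_{\o}$ (because $\s^{N}\bigl(\Phi(\bar x)\bigr)=\O$), so $\s^{N}(x^i)\to\s^{N}(\bar x)$ and (b) give $\s^{N}\bigl(\Phi(x^i)\bigr)\to\O$, which controls the coordinates of $\Phi(x^i)$ beyond $N$. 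The only genuinely new point relative to Theorem~\ref{general-CHL-Theo_1} is that the tail behaviour near $\O$ is now reduced to fact (a), which is precisely where condition~2 is indispensable.
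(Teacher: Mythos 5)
Your proof is correct and follows essentially the same route as the paper's: compactness of $\Lambda$ makes each $C_a$ ($a\in L_\Gamma$) a clopen set and hence a finite union of generalized cylinders, condition 1 is obtained by the same contradiction (a convergent subsequence of points in distinct $C_{a_k}$ must limit to a point of $C_{\o}$, which consists only of infinite sequences, so finite-definedness of $C_{\o}$ eventually traps the $x^{k_j}$ in $C_{\o}$), and the converse is the same case analysis on the limit point $\bar x$ using openness of the $C_a$ and continuity of $\s$ away from $\O$. Your direct derivation of condition 2 from the neighbourhood basis $\{Z(\O,F)\}$ at $\O$, and the packaging of the converse into facts (a) and (b), are mild streamlinings of the paper's contradiction argument and of its case split by $N_{\bar x}$, but the mathematical content is identical.
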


\begin{proof}{\color{white}.}

\begin{description}

\item[$(\Longrightarrow)$] Suppose that $\Phi: \Lambda\to \Gamma$ is continuous and commutes with the shift. To prove that equation \eqref{LR_block_code} holds, with each $C_a$ being a finite union of cylinders in $\Lambda$, and with $\s(C_{\o})\subset C_{\o}$, we use the same ideas as in the proof of Theorem \ref{general-CHL-Theo_1}. Since $C_d$ is a finite union of cylinders in $\Lambda$ and $\O\in C_d$, then $C_d=\big(Z(\O,F_0) \cup \bigcup_{i=1}^{k}Z(z^i,F_i)\big)\cap\Lambda$, where $z^i\in\Sigma_A^{\text{fin}}$ and $F_i\subset \LA$ for each $i=0,\ldots,k$.

    We show that there can be only a finite number of sets $C_a$ which are not empty. To this end suppose, by contradiction, that there exists $\{a_i: i\in\N\}\subset L_\Gamma$ such that $a_i\neq a_j$, for $i\neq j$, and $C_{a_i}\neq\emptyset$. For each $i\in\N$, take $x^i\in C_{a_i}$. Since $\big(\Phi(x^i)\big)_1=a_i$ it follows that $\lim_{i\to\infty}\Phi(x^i)=\O$. Now, let $(x^{i_k})_{k\in\N}$ be a convergent subsequence of $(x^i)_{i\in\N}$ and let $\bar x$ denote its limit. From the continuity of $\Phi$ we get that $\bar x\in C_{\o}$. Observe that this means that $\bar x\notin\Lambda^\text{fin}$, since $l(\bar x)<\infty$ implies that $\big(\Phi(\bar x)\big)_{l(\bar x)+1}= d$, which contradicts that $\bar x \in C_{\o}$. Furthermore, since $C_{\o}$ is a finitely defined set it follows that there exists $\ell\geq 0$ such that if $y\in\Lambda$ is such that $y_1\ldots y_{1+\ell}=\bar x_1\ldots \bar x_{1+\ell}$ then $y\in C_{\o}$. But $x^{i_k}$ converges to $\bar x$ if and only if there exists $K\in\N$ such that $x^{i_k}_1\ldots x^{i_k}_{1+\ell}=\bar x_1\ldots \bar x_{1+\ell}$ for all $k\geq K$. Hence we have that $x^{i_k}\in C_{\o}$ for all $k\geq K$, which contradicts the fact that $x^{i_k}\in C_{a_{i_k}}$ for all $k$.

    Now we shall prove that, for each $M\geq 1$, there exists a finite set $F_M\subset \LA$ such that $\s^n\big(Z(\O,F_M)\cap\Lambda\big)\subset C_d$, for all $n\leq M$. Again by way of contradiction, suppose that this property does not hold. Then there should exist $n\in\N$ such that for any finite set $F\subset \LA$ we have $\s^n\big(Z(\O,F)\cap\Lambda\big)\not\subset C_d$. Let $\{a_i:\ i\in\N\}$ be an enumeration of $\LA\setminus F_0$ and define $F^k:=F_0\cup\{a_i:\ i\leq k\}$. It follows that $Z(\O,F^{k+1})\cap\Lambda\subsetneq Z(\O,F^k)\cap\Lambda\subsetneq Z(\O,F_0)\cap\Lambda$. For each $k\in\N$, take $x^k\in Z(\O,F^k)\cap\Lambda$ such that $\s^n(x^k)\notin C_d$. It is clear that $x^k$ converges to $\O$. Due to the continuity of $\Phi$ it follows that $\Phi(x^k)$ converges to the constant sequence $(ddd\ldots)$. However, for all $k\in\N$, we have that $\s^n(x^k)\notin C_d$ and hence, for all $k\in\N$, we have $\Big(\Phi(x^k)\Big)_{n+1}=\Big(\s^{n}\big( \Phi(x^k)\big)\Big)_1 =\Big(\Phi\big(\s^{n} (x^k)\big)\Big)_1
    \neq d$, a contradiction.

\item[$(\Longleftarrow)$] Let $d\in B$ and suppose $\Phi$ is a sliding block code where $\Phi(\O)=(ddd\ldots)$ and, for all $a\in L_\Gamma$, the set $C_a$ is a finite (maybe empty) union of generalized cylinders in $\Lambda$. Furthermore, suppose that the properties {\em 1.} and {\em 2.} hold.

    As before, we have that $C_d=\big(Z(\O,F_0) \cup \bigcup_{i=1}^{k}Z(z^i,F_i)\big)\cap\Lambda$, where $z^i\in\Sigma_A^{\text{fin}}$ and $F_i\subset \LA$ for each $i=0,\ldots,k$. Furthermore, since for $a\in L_\Gamma$ the set $C_a$ is clopen and there are only a finite number of nonempty $C_a$, it follows that $C_{\o}=\Lambda\setminus\bigcup_{a\in L_\Gamma}C_a$ is clopen and then it is also a finite (maybe empty) union of cylinders of $\Lambda$.

    Given $\bar x\in\Lambda$, let $(x^i)_{i\geq 1}$ be a sequence converging to $\bar x$. We shall check that $\Phi(x^i)\to\Phi(\bar x)$. Without loss of generality, we will assume the following: if $x^i\to\O$ then $x_1^i\neq x_1^j$, for all $i\neq j$; if $x^i\to \bar x\in\Lambda^{\text{fin}}\setminus\{\O\}$ then $x^i_n=\bar x_n$, for all $n\leq l(\bar x)$, and $x^i_{l(\bar x)+1}\neq x^j_{l(\bar x)+1}$ for all $i\neq j$; and if $x^i\to \bar x\in\Lambda^{\text{inf}}$ then $x^i_n=\bar x_n$, for all $n\leq i$.  As before, we define $N_{\bar x}:=l(\Phi(\bar x))$. We have two cases:

    \begin{description}

            \item[i. $N_{\bar x}<\infty$ :] This implies that $\bar x\in \Lambda^{\text{inf}}$ because, if $l(\bar x) <\infty$, then for all $n>l(\bar x)$ we would have that $\big(\Phi(\bar x)\big)_n=\big(\s^{n-1}\circ\Phi(\bar x)\big)_1= \big(\Phi\circ\s^{n-1}(\bar x)\big)_1=\big(\Phi(\O)\big)_1=d$, which contradicts that $N_{\bar x}<\infty$.

                If $\Phi(\bar x)=\O$ then $\bar x\in C_{\o}$. Therefore, since $C_{\o}$ is an open set, $x^i\to \bar x$ implies that there exists $K\geq 1$ such that $x^i\in C_{\o}$, for all $i\geq K$ and hence $\Phi(x^i)=\O$ for all $i\geq K$, that is, $\Phi(x^i)\to\O$.

                If $\Phi(\bar x)=(a_1\ldots a_{N_{\bar x}})$ is a finite word not equal to $\O$ then $\s^{n-1}(\bar x)\in C_{a_n}$, for all $1\leq n\leq ,N_{\bar x}$, while $\s^{n-1}(\bar x)\in C_{\o}$ for all $n>N_{\bar x}$. Since each of $C_{a_1},\ldots,C_{a_{N_{\bar x}}},C_{\o}$ is a finite union of cylinders in $\Lambda$, then for each $n$ there exists $\ell_n\geq 0$ such that the set $C_a$ which contains $\s^{n-1}(\bar x)$ only depends on the entries $(\bar x_n\ldots \bar x_{n+\ell_n})$. In particular, any point $z\in \Lambda$ with $(z_n\ldots z_{n+\ell_n})=(\bar x_n\ldots \bar x_{n+\ell_n})$ will also be such that $\s^{n-1}(z)$ belongs to the same $C_a$ as $\s^{n-1}(\bar x)$. Hence we just need to take $K:=N_{\bar x}+\max\{\ell_1,\ldots,\ell_{N_{\bar x}}\}$ and then $x^i_m=\bar x_m$ for all $m\leq K$ and $i\geq K$. It follows that,, for all $i\geq K$, $\s^{n-1}(x^i)\in C_{a_n}$ for all $n\leq N_{\bar x}$ and $\s^{N_{\bar x}}(x^i)\in C_{\o}$. Furthermore, since $C_{\o}$ is invariant under $\s$, we have that $\s^{n-1}(x^i)\in C_{\o}$ for all $n>N_{\bar x}$ and $i\geq K$. Therefore for $i\geq K$ we have that

                $$\big(\Phi(x^i))_n=\big(\s^{n-1}\circ\Phi(x^i)\big)_1=\big(\Phi\circ \s^{n-1}(x^i)\big)_1=\left\{
                \begin{array}{lcl} a_n &,\ if& n\leq N_{\bar x}\\\\
                                    \o &,\ if& n> N_{\bar x},
                \end{array}\right.$$
                which means that $\Phi(x^i)\to\Phi(\bar x)$.

            \item[ii. $N_{\bar x}=\infty$:] Let $\Phi(\bar x)=(a_1a_2a_3\ldots)$.
             For each $m\in\N$, define $y^m=(a_1\ldots a_m)$. Then $\big\{Z(y^m)\cap\Gamma\big\}_{m\in\N}$ is a neighborhood basis for $\Phi(\bar x)$.

             If $\bar x\in \Lambda^{\text{inf}}$ then we can use the same argument used in {\em i.} above. That is, given $M\geq 1$ we can take $K:=N_{\bar x}+\max\{\ell_1,\ldots,\ell_M\}$, where $\ell_1,\ldots,\ell_M$ are such that if $z\in\Lambda$ satisfies $(z_n\ldots z_{n+\ell_n})=(\bar x_n\ldots \bar x_{n+\ell_n})$ then $\s^{n-1}(z)\in C_{a_n}$. Then, as before, for all $i\geq K$ and $n\leq M$ we have $\s^{n-1}(x^i)\in C_{a_n}$ which means that $\big(\Phi(x^i)\big)_n=a_n=\big(\Phi(\bar x)\big)_n$ for all $n\leq M$ and hence $\Phi(x^i)\in Z(y^M)\cap\Gamma$. With this we conclude that $\Phi(x^i)\to\Phi(\bar x)$.

             If $\bar x = \O$ then $a_n=d$ for all $j\in\N$. From hypothesis {\em 2.}, given $M\geq 1$ there exists a finite set $F_M\subset \LA$, such that $\s^{n-1}\big(Z(\O,F_M)\cap\Lambda\big)\subset C_d$ for all $1\leq n\leq M$. Since $x^i\to\bar x$ we can take $K\geq 1$ such that $x^i\in Z(\O,F_M)$ for all $i\geq K$. Therefore, for all $i\geq K$ and $1\leq n\leq M$ it follows that $\s^{n-1}(x^i)\in C_d$, that is, $\big(\Phi(x^i)\big)_n=\big(\s^{n-1}\circ\Phi(x^i)\big)_1=\big(\Phi\circ\s^{n-1}(x^i)\big)_1= d$. Thus, for all $M\geq 1$ there exists $K\geq 1$ such that $\Phi(x^i)\in Z(y^M)\cap\Gamma$ for all $i\geq K$, which means that $\Phi(x^i)\to \Phi(\bar x)$.

             If $\bar x\in \Lambda^{\text{fin}}\setminus\O$ then we have that $\Phi(\bar x)=(a_1a_2\ldots a_{l(\bar x)}ddd\ldots)$. Thus, for each $n\leq l(\bar x)$, we have $\s^{n-1}(\bar x)\in C_{a_n}$ and, for $n>l(\bar x)$, we have $\s^{n-1}(\bar x)\in C_d$. Since for each $m\leq l(\bar x)$ the set $C_{a_m}$ is a finite union of  cylinders in $\Lambda$ and $\s^{m-1}(\bar x)=(\bar x_m\ldots \bar x_{l(\bar x)})\in C_{a_m}$, then necessarily there exists $G$ such that for each $m\leq l(\bar x)$  we have $Z(\bar x_m\ldots \bar x_{l(\bar x)}, G)\cap\Lambda\subset C_{a_m}$. Therefore given $M\geq 1$, we can take the finite set $F:=G\cup F_M$ and we have that $\s^{m-1}\big((\bar x_1\ldots \bar x_{l(\bar x)}, F)\big)=Z(\bar x_m\ldots \bar x_{l(\bar x)}, F)\cap\Lambda\subset C_{a_m}$ for all $m\leq l(\bar x)$, and $\s^{m-1}\big(Z(\bar x_1\ldots \bar x_{l(\bar x)}, F)\cap\Lambda\big)\subset \s^{m-1}\big(Z(\bar x_1\ldots \bar x_{l(\bar x)}, F_M)\cap\Lambda\big)=\s^{m-l(\bar x)-1}\big(Z(\O,F_M)\big)\subset C_d$ for all $l(\bar x)+1\leq m\leq l(\bar x)+ M$.
             Now let $K\geq 1$ be such that $x^i\in Z(\bar x_1\ldots \bar x_{l(\bar x)}, F)\cap\Lambda$ for all $i\geq K$. Then  for all $i\geq K$ we have that
             $$\big(\Phi(x^i)\big)_m=\big(\s^{m-1}\circ \Phi(x^i)\big)_1=\big(\Phi\circ\s^{m-1}(x^i)\big)_1=\left\{\begin{array}{ll}
             a_m &\text{if } m\leq l(\bar x)\\\\
             d   &\text{if } l(\bar x)+1\leq m\leq l(\bar x)+ M,
             \end{array}\right.$$
             which means that $\Phi(x^i)\in Z(y^{l(\bar x)+M})$. Since this works for all $M\geq 1$, we have $\Phi(x^i)\to \Phi(\bar x)$.

        \end{description}

\end{description}

\end{proof}

We now turn our attention to a special class of sliding block codes, the {\em higher block codes}. Higher block codes are a class of sliding block codes with special importance for coding theory. In the classical theory of finite alphabet shifts, higher block codes allow one to encode $M$-step shifts as 1-step shifts. For the infinite alphabet case we can define higher block codes that will be continuous sliding block codes, but they may fail to be invertible on finite sequences with relatively small length. However, we will show that we can use higher block codes to find a 1-step shift which is a factor of a given $M$-step shift.

\begin{rmk} This difference between the theory of higher block codes for finite/infinite alphabets is to be expected, as in \cite{GR} it is proved that there are ($M+1$)-step shift spaces that are not conjugate to any $M$-step shift space (using the length preserving definition of conjugacy of Ott-Tomforde-Willis).
\end{rmk}

\begin{defn}\label{defn_HBC} Given $\Sigma_A$ and $M\in\N$, denote $A^{(M)}:=B_M(\Sigma_A)$. The {\em $M^{th}$ higher block code}
is the map $\Xi^{(M)}:\Sigma_A\to\Sigma_{A^{(M)}}$ given, for all $x\in\Sigma_A$ and $i\in\N$, by
$$\big(\Xi^{(M)}(x)\big)_i:=\begin{cases}\left[x_i\ldots x_{i+M-1}\right] &\text{if } x_j\neq\o\text{ for all } i\leq j\leq i+M-1\\
                                                \o & \text{otherwise}.\end{cases}$$

\end{defn}

It is immediate that a higher block code is a sliding block code with bounded anticipation. We also have the following.

\begin{prop}\label{HBC}
Let $\Lambda\subset \Sigma_A$ be a shift space and let $\Xi^{(M)}:\Sigma_A\to\Sigma_{A^{(M)}}$ be the $M^{th}$ higher block code. Then,
\begin{enumerate}

\item $\Xi^{(M)}$ is continuous.

\item $\Lambda^{(M)}:=\Xi^{(M)}(\Lambda$) is a shift space in $\Sigma_{A^{(M)}}$.

\item Let $\Lambda^*:=\{x\in\Lambda:\ l(x)=0\ or\ l(x)\geq M\}$. Then $\left.\Xi^{(M)}\right|_{\Lambda^*}:\Lambda^*\to\Lambda^{(M)}$ is an invertible map defined by a local rule (that is, is of the form \eqref{LR_block_code}) whose inverse is given, for all $\mathbf x\in\Lambda^{(M)}$ and $i\in\N$, by

\begin{equation}\label{inverse_HBC}\Big({\Xi^{(M)}}^{-1}\big(\mathbf x\big)\Big)_i=\left\{\begin{array}{lll}
x_i &,\ if& i\leq l(\mathbf x),\ and\ \mathbf x_i=[x_i
\ldots x_{i+M-1}]\\\\
x_{l(\mathbf x) +j} &,\ if& i=l(\mathbf x)+j,\ with\ 1\leq j\leq M-1\ and\ \mathbf x_{l(\mathbf x)}=[x_{l(\mathbf x)}
\ldots x_{l(\mathbf x)+M-1}]\\\\
\o &,\ & otherwise.

\end{array}\right.
\end{equation}

\end{enumerate}
\end{prop}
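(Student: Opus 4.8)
The plan is to dispatch the three items in order, each time reducing to earlier results and to one bookkeeping fact relating length-$n$ windows in $\Lambda^{(M)}$ to length-$(n+M-1)$ windows in $\Lambda$.

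For item~1 I would first record that $\Xi^{(M)}$ is a sliding block code in the sense of Definition~\ref{defn_sliding block code}. Put $C_{[a_1\ldots a_M]}:=Z(a_1\ldots a_M)\cap\Lambda$ for every word $a_1\ldots a_M\in B_M(\Lambda)$, and $C_{\o}:=\{x\in\Lambda:\ l(x)\le M-1\}$. Then $\{C_a\}_{a\in L_{\Lambda^{(M)}}\cup\{\o\}}$ is a partition of $\Lambda$; each $C_{[a_1\ldots a_M]}$ is a generalized cylinder of $\Lambda$ and hence finitely defined, and $C_{\o}$ is finitely defined since all its elements have length $\le M-1$; moreover $C_{\o}$ is shift invariant because once a coordinate equals $\o$ so do all later ones; finally the right-hand side of \eqref{LR_block_code} for these sets computes exactly $[x_n\ldots x_{n+M-1}]$ or $\o$ as prescribed in Definition~\ref{defn_HBC}. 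Since in addition $\Xi^{(M)}(\O)=\O$, $C_{\o}=(\Xi^{(M)})^{-1}(\O)$ is finitely defined, and every $C_{[a_1\ldots a_M]}$ is a (single) generalized cylinder of $\Lambda$, Theorem~\ref{general-CHL-Theo_1} (the ``$\Longleftarrow$'' direction) applies and gives that $\Xi^{(M)}$ is continuous and shift commuting.

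For item~2, closedness of $\Lambda^{(M)}=\Xi^{(M)}(\Lambda)$ is immediate: $\Lambda$ is compact, $\Xi^{(M)}$ is continuous by item~1, so $\Lambda^{(M)}$ is compact hence closed in $\Sigma_{A^{(M)}}$; shift invariance follows from $\s\circ\Xi^{(M)}=\Xi^{(M)}\circ\s$ together with $\s(\Lambda)\subseteq\Lambda$. The real content is the infinite extension property. The key observation I would isolate is that a block $(\mathbf w_1,\ldots,\mathbf w_n)\in B_n(\Lambda^{(M)})$ with all entries in $A^{(M)}$ is automatically ``consistent'' (consecutive windows overlap, since it is a subblock of some $\Xi^{(M)}(y)$), and is therefore encoded by a unique word of letters $w=w_1\ldots w_{n+M-1}\in B_{n+M-1}(\Lambda)$ via $\mathbf w_j=[w_j\ldots w_{j+M-1}]$; conversely every all-letter word of length $n+M-1$ in $\Lambda$ arises this way. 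From this correspondence I extract: (i) $|L_{\Lambda^{(M)}}|=\infty$ iff $\Lambda$ has infinitely many all-letter words of length $M$, which by density of $\Lambda^{\text{inf}}$ (Proposition~3.8 of \cite{Ott_et_Al2014}) is equivalent to $|\LA|=\infty$, i.e.\ to $\O\in\Lambda$, i.e.\ to $\O=\Xi^{(M)}(\O)\in\Lambda^{(M)}$ (and when $|\LA|<\infty$ both sides fail, since then $L_{\Lambda^{(M)}}$ is finite and $\O$ cannot be a limit of $\Lambda^{(M)}$); (ii) for $\mathbf w$ as above, the equation $\Xi^{(M)}(y)=\mathbf w$ read as a length-$n$ finite sequence has the unique solution $y=w$ precisely when $w\in\Lambda^{\text{fin}}$, so $\mathbf w\in(\Lambda^{(M)})^{\text{fin}}$ iff $w\in\Lambda^{\text{fin}}$; (iii) $\F(\Lambda^{(M)},\mathbf w)$ corresponds, via $[b]\mapsto b_M$, to $\F(\Lambda,w)$ up to the single extra symbol $\o$, so $|\F(\Lambda^{(M)},\mathbf w)|=\infty$ iff $|\F(\Lambda,w)|=\infty$. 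Chaining (ii), (iii) with the infinite extension property of $\Lambda$ gives $\mathbf w\in(\Lambda^{(M)})^{\text{fin}}\iff|\F(\Lambda^{(M)},\mathbf w)|=\infty$ for $\mathbf w\neq\O$, and (i) handles $\O$; this is property~3 for $\Lambda^{(M)}$.

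For item~3, injectivity of $\Xi^{(M)}|_{\Lambda^*}$ and the explicit inverse are two sides of one computation: for $x\in\Lambda^*$ the sequence $\mathbf x=\Xi^{(M)}(x)$ has $l(\mathbf x)=l(x)-M+1$ (infinite when $x$ is), and one recovers $x$ by reading $x_i$ as the first coordinate of $\mathbf x_i$ for $i\le l(\mathbf x)$ and reading the remaining coordinates $x_{l(\mathbf x)+1},\ldots,x_{l(\mathbf x)+M-1}$ off the last $M-1$ entries of $\mathbf x_{l(\mathbf x)}$ — which is exactly formula \eqref{inverse_HBC}; so the restriction is injective and \eqref{inverse_HBC} is a left inverse. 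Conversely, given $\mathbf x\in\Lambda^{(M)}$ with $\mathbf x\neq\O$, write $\mathbf x=\Xi^{(M)}(y)$ with $y\in\Lambda$; since $\mathbf x\neq\O$ we must have $l(y)\ge M$, i.e.\ $y\in\Lambda^*$, and the previous computation shows the right-hand side of \eqref{inverse_HBC} equals $y$, so \eqref{inverse_HBC} is a genuine two-sided inverse and $\Xi^{(M)}|_{\Lambda^*}$ is a bijection onto $\Lambda^{(M)}\setminus\{\O\}$ (which is all of $\Lambda^{(M)}$ exactly when $\O\notin\Lambda^{(M)}$, e.g.\ when $\LA$ is finite). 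That $\Xi^{(M)}|_{\Lambda^*}$ is defined by a local rule of the form \eqref{LR_block_code} is just the restriction to $\Lambda^*$ of the partition found in item~1. I expect the main obstacle to be item~2's infinite extension property, and within it the careful handling of the ``boundary'' of finite sequences: one must check that a length-$n$ finite sequence of $\Lambda^{(M)}$ corresponds to a length-$(n+M-1)$ finite sequence of $\Lambda$ (not merely a block), that the overlap constraints make the encoding $\mathbf w\leftrightarrow w$ genuinely bijective, and that follower sets transform as claimed; everything else (compactness, shift-commuting, and the inverse formula) is routine once item~1 is in hand.
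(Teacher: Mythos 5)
Your proposal is correct, and items 1 and 3 follow essentially the same route as the paper, which simply cites Theorem \ref{general-CHL-Theo_1} for continuity and observes injectivity on sequences of length at least $M$; you supply the explicit partition ($C_{[a_1\ldots a_M]}=Z(a_1\ldots a_M)\cap\Lambda$, $C_{\o}=\{x:\ l(x)\leq M-1\}$) and the length computation $l(\Xi^{(M)}(x))=l(x)-M+1$ that are left implicit there. The genuine divergence is in item 2. The paper verifies the infinite extension property in its topological form: it takes a finite $y=\big([b_i\ldots b_{i+M-1}]\big)_{1\leq i\leq L}\in(\Lambda^{(M)})^{\text{fin}}$, notes that it must be the image of a finite $b\in\Lambda^{\text{fin}}$ of length $L+M-1$, lifts a sequence of infinite sequences $x^n\to b$ in $\Lambda$, and checks that $\Xi^{(M)}(x^n)\to y$ in $\Lambda^{(M)}$. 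You instead verify the follower-set form of the property via the correspondence between all-letter blocks of $\Lambda^{(M)}$ of length $n$ and blocks of $\Lambda$ of length $n+M-1$, showing that membership in the finite part and infiniteness of follower sets are both preserved under this correspondence. Both arguments hinge on the same key fact (finite sequences of $\Lambda^{(M)}$ come from finite sequences of $\Lambda$ with length shifted by $M-1$); your version additionally makes explicit the converse implication (a block of $\Lambda^{(M)}$ with infinite follower set lies in $(\Lambda^{(M)})^{\text{fin}}$) and the $\O$ case, which the paper's limit argument leaves tacit, at the cost of more bookkeeping. Finally, your observation in item 3 that $\Xi^{(M)}|_{\Lambda^*}$ is really a bijection onto $\Lambda^{(M)}\setminus\{\O\}$ (hence onto all of $\Lambda^{(M)}$ only when $\O\notin\Lambda^{(M)}$) is a legitimate sharpening of the stated claim, not a defect of your argument.
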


\begin{proof} We prove each statement separately.

\begin{enumerate}

\item This follows directly from Theorem \ref{general-CHL-Theo_1}.

\item Since $\Xi$ is a continuous sliding block code, $\Lambda^{(M)}$ is closed and shift invariant. Thus we only need to show that $\Lambda^{(M)}$ satisfies the infinite extension property.

  Observe that if $y=\big([b_i\ldots b_{i+M-1}]\big)_{1\leq i\leq L}\in{\Lambda^{(M)}}^{\text{fin}}$ then it must be the image of some finite sequence $b=(b_i)_{1\leq i\leq L+M-1}\in\Lambda^{\text{fin}}$. Therefore, since
   $\Lambda$ has the infinite extension property, there exists a sequence, $x^n = (x_i^n)_{i\in\N}$ in $\Lambda^{\text{inf}}$, such that $x^{n}_i=b_i$ for all $n\geq 1$ and $1 \leq i\leq L+M-1$, and $x^n_{L+M}\neq x^m_{L+M}$ if $n\neq m$ (that
   is, $(x^n)_{n\in\N}$ is a sequence of infinite sequences which converges to $b$).
   Define $y^n=(y^n_i)_{i\in\N}:=\Xi^{(M)}(x^{n})$. It is clear that $y^n\in {\Lambda^{(M)}}^{\text{inf}}$. Note that, for all $n\geq 1$ and $1 \leq i\leq L$, we have that $y^n_i=[b_i\ldots b_{i+M-1}]=y_i$ and $y^n_{L+1}=[b_{L+1}\ldots
   x^n_{L+M}]$. Hence, since $x^n_{L+M}\neq x^m_{L+M}$ if $n\neq m$, it follows that $y^n_{L+1}=[b_{L+1}\ldots x^n_{L+M}]\neq[b_{L+1}\ldots x^m_{L+M}]= y^m_{L+1}$ if $m\neq n$, which means that $y^n$ converges to
   $y$. Since this holds for any sequence in ${\Lambda^{(M)}}^{\text{fin}}$, the infinite extension property holds in $\Lambda^{(M)}$.

\item This follows directly from observing that $\Xi^{(M)}$ is injective on the set of sequences with length equal or greater than $M$, while any sequence with length less than $M$ will be mapped to $\O$.

\end{enumerate}

\end{proof}

 Given a shift space $\Lambda\subseteq\Sigma_A$, we say that the shift space $\Lambda^{(M)}$ is its {\em $M^{th}$ higher block presentation}. Note that from Proposition \ref{HBC} we have that $\Lambda^{(M)}$ is a factor of $\Lambda$. On the other hand, although there is a one-to-one correspondence between $\Lambda^{(M)}$ and $\Lambda^*$ through $\Xi^{(M)}$, this does not mean that  $\Lambda^{(M)}$ and $\Lambda^*$ are conjugate because $\Lambda^*$ is not a shift space in general (since in general it is not $\s$-invariant) and, even if it is, ${\Xi^{(M)}}^{-1}$ may not be a sliding block code (since for some $\mathbf x\in\Lambda$ and $i\in\N$, $({\Xi^{(M)}}^{-1}(\mathbf x))_i$ might depend on the value of $i$).

\begin{cor}
Let $\Lambda\subset \Sigma_A$ be a shift space and let $\Xi^{(M)}:\Sigma_A\to\Sigma_{A^{(M)}}$ be its $M^{th}$ higher block code. Consider the restriction $\left.\Xi^{(M)}\right|_{\Lambda^*}:\Lambda^*\to \Lambda^{(M)}$. The following statements are equivalent:
\begin{enumerate}
\item $\sup_{x\in \Lambda^{\text{fin}}} l(x)<M$ ;
\item $\Lambda^{*\text{fin}}\subset\{\O\}$;
\item The inverse of $\left.\Xi^{(M)}\right|_{\Lambda^*}$ is a sliding block code.
\end{enumerate}
\end{cor}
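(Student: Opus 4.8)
The plan is to get $(1)\Leftrightarrow(2)$ for free from the definitions and then to prove the real content, $(2)\Leftrightarrow(3)$, by establishing $(2)\Rightarrow(3)$ together with its contrapositive $\neg(2)\Rightarrow\neg(3)$; in both halves I would work directly from the explicit formula \eqref{inverse_HBC} for ${\Xi^{(M)}}^{-1}$ given in Proposition \ref{HBC}. For $(1)\Leftrightarrow(2)$ it suffices to observe that, since $l(\O)=0$ and $M\geq 1$, a finite sequence $x$ of $\Lambda$ lies in $\Lambda^{*\text{fin}}\setminus\{\O\}$ exactly when $M\leq l(x)<\infty$; hence $\Lambda^{*\text{fin}}\subseteq\{\O\}$ holds iff $\Lambda$ has no finite sequence of length $\geq M$, and, lengths being integers, this is the same as $\sup_{x\in\Lambda^{\text{fin}}}l(x)<M$.

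For $(2)\Rightarrow(3)$, assuming $\Lambda^{*\text{fin}}\subseteq\{\O\}$, I would first show that then ${\Lambda^{(M)}}^{\text{fin}}\subseteq\{\O\}$: any finite $\mathbf y\in\Lambda^{(M)}$ with $l(\mathbf y)=L\geq 1$ is $\Xi^{(M)}(x)$ for some $x\in\Lambda$ (as $\Lambda^{(M)}=\Xi^{(M)}(\Lambda)$), and since $\Xi^{(M)}$ carries infinite sequences to infinite sequences and a finite sequence of length $k$ to one of length $\max\{k-M+1,0\}$, such an $x$ must be finite with $l(x)=L+M-1\geq M$ — contradicting $(2)$. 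So every point of $\Lambda^{(M)}$ is infinite or equal to $\O$, and on these \eqref{inverse_HBC} collapses to its first line: writing $\pi$ for the first-coordinate map $A^{(M)}\cup\{\o\}\to\LA\cup\{\o\}$, $[a_1\ldots a_M]\mapsto a_1$, $\o\mapsto\o$, one gets $\bigl({\Xi^{(M)}}^{-1}(\mathbf x)\bigr)_n=\pi\bigl((\s^{n-1}\mathbf x)_1\bigr)$ for every $\mathbf x\in\Lambda^{(M)}$ (for $\mathbf x=\O$ this reads $\o=\pi(\o)$, using ${\Xi^{(M)}}^{-1}(\O)=\O$). I would then read this off as a sliding block code in the sense of Definition \ref{defn_sliding block code}: put $C_{\o}:=\{\mathbf x\in\Lambda^{(M)}:\mathbf x_1=\o\}$, $C_a:=\{\mathbf x\in\Lambda^{(M)}:\pi(\mathbf x_1)=a\}$ for $a\in\LA$, and $C_a:=\emptyset$ otherwise. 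These partition $\Lambda^{(M)}$; each is finitely defined in $\Lambda^{(M)}$, membership being decided by the single coordinate $\mathbf x_1$, hence with anticipation $0$; $C_{\o}$ equals $\{\O\}$ or $\emptyset$ and so is $\s$-invariant; and the displayed formula is exactly \eqref{LR_block_code}. Thus ${\Xi^{(M)}}^{-1}$ is a sliding block code — in fact a $1$-block code (one could equally phrase this via Remark \ref{defn_alpha} with local rule $\pi$).

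For $\neg(2)\Rightarrow\neg(3)$, I would pick $\bar x\in\Lambda^{\text{fin}}$ with $\bar x\neq\O$ and $l(\bar x)\geq M$, put $\mathbf y:=\Xi^{(M)}(\bar x)\in{\Lambda^{(M)}}^{\text{fin}}$, note $l(\mathbf y)=l(\bar x)-M+1$, and use \eqref{inverse_HBC} to get ${\Xi^{(M)}}^{-1}(\mathbf y)=\bar x$, whose length $l(\mathbf y)+(M-1)$ strictly exceeds $l(\mathbf y)$ since $M\geq 2$. As ${\Xi^{(M)}}^{-1}(\O)=\O$, if ${\Xi^{(M)}}^{-1}$ were a sliding block code then Corollary \ref{sliding block code->finite seq goes to finite seq} would force $l\bigl({\Xi^{(M)}}^{-1}(\mathbf y)\bigr)\leq l(\mathbf y)$ — a contradiction — so ${\Xi^{(M)}}^{-1}$ is not a sliding block code.

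The step I expect to be the main obstacle is $(2)\Rightarrow(3)$, and the difficulty is not computational but a matter of treating the empty sequence correctly: reducing $\Lambda^{(M)}$ to its infinite sequences together with $\O$, and committing to ${\Xi^{(M)}}^{-1}(\O)=\O$ so that the domain is the shift space $\Lambda^{(M)}$ rather than $\Lambda^{(M)}\setminus\{\O\}$. A second point worth care is that the sets $C_a$ above are only required to be \emph{finitely defined} (anticipation $0$), not finite unions of cylinders: over an infinite alphabet each $C_a$ is typically an \emph{infinite} union of cylinders, which is precisely why ${\Xi^{(M)}}^{-1}$ is a sliding block code but, unless $\LA$ is finite, fails to be continuous — in keeping with Theorem \ref{general-CHL-Theo_1}.
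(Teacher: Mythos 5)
Your proof is correct and follows essentially the same route as the paper: $(i)\Leftrightarrow(ii)$ is immediate, $(ii)\Rightarrow(iii)$ goes through showing ${\Lambda^{(M)}}^{\text{fin}}\subset\{\O\}$ so that \eqref{inverse_HBC} collapses to a $1$-block code, and $\neg(ii)\Rightarrow\neg(iii)$ extracts from \eqref{inverse_HBC} that a nonempty finite sequence in $\Lambda^{(M)}$ is incompatible with the inverse being a sliding block code. Your explicit appeal to Corollary \ref{sliding block code->finite seq goes to finite seq} (a sliding block code fixing $\O$ cannot increase the length of a finite sequence, while ${\Xi^{(M)}}^{-1}$ increases it by $M-1$) just makes precise the paper's terser reading of \eqref{inverse_HBC}, and your remarks on needing $M\geq 2$ there and on the sets $C_a$ being finitely defined but not clopen are clarifications rather than deviations.
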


\begin{proof}

It is direct that $i.$ is equivalent to $ii.$.

To prove that $ii.$ implies $iii.$ we just need to realize that $\Lambda^{*\text{fin}}\subset\{\O\}$ implies that ${\Lambda^{(M)}}^{\text{fin}}\subset\{\O\}$ and then, from \eqref{inverse_HBC}, the inverse of $\left.\Xi^{(M)}\right|_{\Lambda^*}$ is a 1-block code.
Conversely, if the inverse of $\left.\Xi^{(M)}\right|_{\Lambda^*}$ is a sliding block code then, from \eqref{inverse_HBC}, the unique possible finite sequence in $\Lambda^{(M)}$ is the empty sequence, what is only possible if $\Lambda^{*\text{fin}}\subset\{\O\}$.

\end{proof}

\begin{cor}
Let $\Lambda\subset \Sigma_A$ be a shift space and let $\Xi^{(M)}:\Sigma_A\to\Sigma_{A^{(M)}}$ be its $M^{th}$ higher block code. The restriction $\left.\Xi^{(M)}\right|_{\Lambda}:\Lambda\to \Lambda^{(M)}$ is a homeomorphism if, and only if, $\Lambda$ is row-finite.
\end{cor}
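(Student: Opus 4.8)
The plan is to prove both implications of the equivalence ``$\left.\Xi^{(M)}\right|_\Lambda$ is a homeomorphism $\iff$ $\Lambda$ is row-finite'' using the results already established. Recall that $\Xi^{(M)}$ is continuous (Proposition \ref{HBC}, item 1) and that $\Lambda^{(M)}:=\Xi^{(M)}(\Lambda)$ is a shift space (item 2); moreover on $\Lambda^*=\{x\in\Lambda:\ l(x)\ge M\}$ the map $\Xi^{(M)}$ is injective with explicit inverse \eqref{inverse_HBC} (item 3). The key obstruction to injectivity on all of $\Lambda$ is that every finite sequence of length strictly less than $M$ is sent to $\O$. Thus $\left.\Xi^{(M)}\right|_\Lambda$ is injective if, and only if, $\Lambda$ has no finite sequence of length in $\{1,\ldots,M-1\}$, equivalently (since by the infinite extension property $\O\in\Lambda^{\text{fin}}$ iff $|\LA|=\infty$, and longer finite sequences may or may not occur) we must rule out all nonempty finite sequences as well; I will reduce everything to the condition $\Lambda^{\text{fin}}\subset\{\O\}$, which by \cite[Proposition 3.21]{Ott_et_Al2014} quoted in Section \ref{Background} is precisely row-finiteness.

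First I would prove the ``if'' direction. Suppose $\Lambda$ is row-finite, so $\Lambda^{\text{fin}}\subset\{\O\}$. Then $\Xi^{(M)}(\O)=\O$ and every other point of $\Lambda$ lies in $\Lambda^*$, so by item 3 of Proposition \ref{HBC} the map $\left.\Xi^{(M)}\right|_\Lambda$ is a bijection onto $\Lambda^{(M)}$. It is continuous by item 1. Since $\Lambda$ is compact (it is a closed subset of the compact space $\Sigma_A$) and $\Lambda^{(M)}$ is Hausdorff (being a subspace of $\Sigma_{A^{(M)}}$), a continuous bijection from a compact space to a Hausdorff space is a homeomorphism. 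Hence $\left.\Xi^{(M)}\right|_\Lambda$ is a homeomorphism.

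Next I would prove the contrapositive of the ``only if'' direction: if $\Lambda$ is not row-finite, then $\left.\Xi^{(M)}\right|_\Lambda$ is not a homeomorphism. If $\Lambda$ is not row-finite then $\Lambda^{\text{fin}}\not\subset\{\O\}$, so $\Lambda$ contains a nonempty finite sequence. Among such, there are two sub-cases. If $\Lambda$ contains a finite sequence $x$ with $1\le l(x)\le M-1$, then $\Xi^{(M)}(x)=\O=\Xi^{(M)}(\O)$ while $x\ne\O$, so $\left.\Xi^{(M)}\right|_\Lambda$ is not injective, hence not a homeomorphism. If instead every nonempty finite sequence of $\Lambda$ has length $\ge M$, then in particular $\LA$ is infinite and there exists a nonempty finite sequence $x$ of length exactly $\ell\ge M$, say (applying the infinite extension property repeatedly, or just taking the shortest nonempty finite sequence). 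The point is now that $\Xi^{(M)}$ fails to be an open map at such $x$, equivalently its inverse fails to be continuous: I would exhibit a sequence $(y^n)$ in $\Lambda^{(M)}$ converging to $\Xi^{(M)}(x)$ whose preimages under \eqref{inverse_HBC} do not converge to $x$. Concretely, since $\LA$ is infinite and $x$ is finite, for the empty sequence $\O\in\Lambda^{\text{fin}}$ pick distinct letters $a^n\in\LA$ with $a^n$ (eventually) not extending to a finite sequence, giving one-letter or longer infinite sequences $z^n\in\Lambda^{\text{inf}}$ with $z^n\to\O$ but $\Xi^{(M)}(z^n)$ recording the block structure; the mismatch is that $\Xi^{(M)}(\O)=\O$ has length $0$ whereas $\Xi^{(M)}(z^n)$ has length $0$ too, so I must instead work at the finite sequence $x$ itself and use that $\Xi^{(M)}(x)$ has length $l(x)-M+1$, while infinite sequences $x w^n\to$ (the infinite extension of $x$) have images agreeing with $\Xi^{(M)}(x)$ on the first $l(x)-M+1$ coordinates but continuing past it — so their images converge to something of length $\ge l(x)-M+1$ that is not $\Xi^{(M)}(x)$ unless the convergence is as in $\Lambda^{(M)}$.

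The cleanest route, which I would ultimately adopt, is to observe directly that $\left.\Xi^{(M)}\right|_\Lambda$ being a homeomorphism forces it to be injective, and injectivity plus item 3 of Proposition \ref{HBC} forces $\Lambda^{\text{fin}}\subset\{\O\}$: indeed if $x\in\Lambda^{\text{fin}}$ with $1\le l(x)<M$ then $\Xi^{(M)}(x)=\O=\Xi^{(M)}(\O)$ contradicts injectivity, while if $x\in\Lambda^{\text{fin}}$ with $l(x)\ge M$ then $\Xi^{(M)}(x)\in{\Lambda^{(M)}}^{\text{fin}}$ is a nonempty finite sequence, and using the infinite extension property in $\Lambda^{(M)}$ (established in item 2) one produces a sequence of infinite sequences in $\Lambda^{(M)}$ converging to $\Xi^{(M)}(x)$; pulling back through the bijection, and using that $\Xi^{(M)}$ restricted to $\Lambda^*$ has inverse \eqref{inverse_HBC} which is continuous on infinite sequences but whose evaluation at $\Xi^{(M)}(x)$ depends on detecting that the sequence is finite (a non-local feature, exactly as in Example \ref{sliding block code_examp}(g)), one sees the inverse is discontinuous at $\Xi^{(M)}(x)$. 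Hence a homeomorphism is possible only when $\Lambda^{\text{fin}}\subset\{\O\}$, i.e. $\Lambda$ is row-finite. The main obstacle is this last discontinuity argument for the sub-case $l(x)\ge M$: I must carefully produce the converging sequence in $\Lambda^{(M)}$ and check that its $\Xi^{(M)}$-preimages, given by \eqref{inverse_HBC}, stay infinite and therefore cannot converge to the finite point $x$ in $\Lambda$, which is where the asymmetry between $\Lambda$ and $\Lambda^{(M)}$ at finite sequences of small length bites.
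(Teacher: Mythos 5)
Your ``if'' direction is correct and matches the paper's (implicit) argument: row-finiteness gives $\Lambda^{\text{fin}}\subset\{\O\}$, hence injectivity, and a continuous bijection from the compact space $\Lambda$ onto the Hausdorff space $\Lambda^{(M)}$ is a homeomorphism. The genuine gap is in your second sub-case of the ``only if'' direction ($x\in\Lambda^{\text{fin}}$ with $l(x)\ge M$), exactly where you flag ``the main obstacle''. The inference that the preimages of a sequence converging to $\Xi^{(M)}(x)$ ``stay infinite and therefore cannot converge to the finite point $x$'' is false in the Ott--Tomforde--Willis topology: infinite sequences converge to finite sequences all the time --- $\Lambda^{\text{inf}}$ is dense in $\Lambda$, which is the whole point of the infinite extension property. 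In fact the inverse \emph{is} continuous at such a point: writing $L=l(x)-M+1=l\bigl(\Xi^{(M)}(x)\bigr)$, any $y^n\to\Xi^{(M)}(x)$ eventually satisfies $y^n_i=[x_i\ldots x_{i+M-1}]$ for $i\le L$ while the letters $y^n_{L+1}=[x_{L+1}\ldots x_{l(x)}\,z^n_{l(x)+1}]$ escape every finite subset of $L_{\Lambda^{(M)}}$; since all coordinates of that block except the last are pinned, the letters $z^n_{l(x)+1}$ of the preimages $z^n$ must escape every finite subset of $A$, so $z^n\to x$. No contradiction can be extracted from this sub-case, and your proof as written does not close it.

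Fortunately the sub-case is vacuous, which is the one-line observation you are missing: $\Lambda$ is shift-invariant and the shift of a finite sequence of $\Lambda$ is again a finite sequence of $\Lambda$, so if $\Lambda^{\text{fin}}\not\subset\{\O\}$ then $\Lambda^{\text{fin}}$ contains a sequence of length exactly $1$ (and also $\O$, since $\LA$ is then infinite). For $M\ge 2$ both are sent to $\O$ by $\Xi^{(M)}$, so your first sub-case already destroys injectivity whenever $\Lambda$ fails to be row-finite. This reduction is in substance the paper's two-line proof, which rests on the single fact that $\Xi^{(M)}$ is injective on $\Lambda$ precisely when $\Lambda$ has no finite sequences outside $\{\O\}$, i.e.\ when $\Lambda$ is row-finite. (A side remark applying to both your argument and the statement itself: one needs $M\ge 2$ here, since for $M=1$ the higher block code is a relabelling homeomorphism for every $\Lambda$.)
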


\begin{proof}
Note that $\Lambda^*=\Lambda$ if, and only if, $\Lambda$ is row-finite. The result now follows since $\Xi^{(M)}$ is invertible on $\Lambda$ if, and only if, $\Lambda^*=\Lambda$.

\end{proof}



\section*{Acknowledgments}

\noindent D. Gon\c{c}alves was partially supported by Capes grant PVE085/2012 and CNPq.

\noindent M. Sobottka was supported by CNPq-Brazil grants 304813/2012-5 and 480314/2013-6. Part of this work was
carried out while the author was postdoctoral fellow of CAPES-Brazil at Center for Mathematical Modeling, University of Chile.

\noindent C. Starling was supported by CNPq, and work on this paper occurred while the author held a postdoctoral fellowship at UFSC.


\end{document}